\pgfplotsset{compat=1.16} 
\newtheorem{theorem}{Theorem}
\newtheorem{problem}[theorem]{Problem}
\newtheorem{proposition}{Proposition}
\DeclareMathOperator{\diag}{diag}
 \DeclareMathOperator*{\argmin}{arg\,min}
\newenvironment{proof}[1][Proof]{\textbf{#1.} }{\ \rule{0.5em}{0.5em}}
\title{Mixed-Integer Approaches to Constrained \\Optimum Communication Spanning Tree Problem}
\author{Alexander Veremyev$^a$, Mikhail Goubko$^b$}
\date{\small
$^a$~University of Central Florida, Orlando, FL, 32816-2368, USA, alexander.veremyev@ucf.edu\\ 
$^b$~V.\,A.~Trapeznikov Institute of Control Sciences, 117997, Profsoyuznaya, 65, Moscow, Russia, mgoubko@mail.ru (\textbf{corresponding author})
}
\providecommand{\keywords}[1]
{
  \small	
  \textbf{\textit{Keywords---}} #1
}
\begin{document}
\maketitle

\begin{abstract}
Several novel mixed-integer linear and bilinear formulations are proposed for the optimum communication spanning tree problem. They implement the distance-based approach: graph distances are directly modeled by continuous, integral, or binary variables, and interconnection between distance variables is established using the recursive Bellman-type conditions or using matrix equations from algebraic graph theory. These non-linear relations are used either directly giving rise to the bilinear formulations, or, through the big-$M$ reformulation, resulting in the linear programs. A branch-and-bound framework of Gurobi~$9.0$ optimization software is employed to compare performance of the novel formulations on the example of an optimum requirement spanning tree problem with additional vertex degree constraints. Several real-world requirements matrices from transportation industry are used to generate a number of examples of different size, and computational experiments show the superiority of the two novel linear distance-based formulations over the the traditional multicommodity flow model.
\end{abstract}

\keywords{Combinatorial optimization, Integer programming, Graph theory, Network design}

\section{Introduction}\label{sect_intro}




To simplify notation, in this article an undirected graph $G=\langle V,E\rangle$ with a vertex set $V$ and an edge set $E$ is modeled as a digraph with bi-directional arcs; so, if an edge exists in $G$ between vertices $i$ and $j$, then both $(i,j)\in E$ and $(j,i)\in E$. 

Given an undirected graph $G=\langle V,E_s\rangle$ with the non-negative length $t_{ij}=t_{ji}$ assigned to each admissible edge $(i,j)\in E_s$ and a set of non-negative requirements $\mu_{ij}=\mu_{ji}$, $(i,j)\in R\subseteq V\times V$, $i\neq j$, the \textit{optimum communication spanning tree problem} (OCSTP) is to find a spanning tree $T$ of the graph $G$ that minimizes the \textit{communication cost}
\begin{equation}\label{eq_cost}
C(T)=\sum_{i,j\in V, i\neq j}\mu_{ij}d_T(i,j),
\end{equation}
where $d_T(i,j)$ is the total length of the path between the vertices $i$ and $j$ in the tree $T$.

Additional constraints imposed on the topology of admissible trees (e.g., on the maximum vertex degree \citep{kim2015distributed} or the whole degree sequence \citep{goubko2016minimizing}) give rise to the \textit{constrained OCSTP}.

OCSTP is among classical NP-complete problems of combinatorial optimization \citep{Garey79} arising in many areas, including communications, transportation industry, mathematical chemistry, and molecular biology. 

We use Gurobi\textsuperscript{TM} MILP/MIBP solver to compare traditional mixed-integer programming (MIP) approaches to the constrained OCSTP (the brute-force and the multicommodity flow schemes) with some recent mixed-integer linear programming (MILP) and quadratic (MIQP) formulation techniques \citep{veremyev2015critical}. We also propose one novel MILP scheme and two novel MIQP schemes that are based on the direct modeling of distances in a tree. 

The computational evidence is presented on a broad set of origin-destination matrices from the transportation industry. We show that the novel MILP formulations significantly outperform their competitors allowing to build optimal trees with the given vertex degree sequence over several dozens of vertices on a PC using the universal algorithms implemented in modern commercial solvers. We also discuss the limits of possible extension of the proposed models to directed graphs and to general graphs with cycles.

\section{Literature review}\label{sect_review}

Proposed by \cite{hu1974optimum}, OCSTP reduces to the more general \textit{multicommodity uncapacitated fixed-charge network design problem} \citep{wong1976survey}, which belongs to a broad class of \textit{optimal network design problems} (ONDP) \citep{scott1969optimal}. OCSTP has important special cases. It is called the \textit{optimum distance spanning tree problem} (ODSTP) if requirements $\mu_{ij}$ are all assumed to be equal; and it is called the \textit{optimum requirement spanning tree problem} (ORSTP) if equal edge lengths $t_{ij}$ are defined.

Both ODSTP and ORSTP (and, hence, OCSTP) are known to be NP-complete \citep{johnson1978complexity}. In the proof, the graph $G$ is built with topology that resembles the reduction from some known NP-complete problem. However, additional conditions on vertex degrees  may complicate even ORSTP over a complete graph $G$. An exact algorithm \citep{hu1974optimum} of complexity $\Theta(|V|^4)$ exists for this problem\footnote{	$f(n)=\Theta(g(n))$ if and only if $f$ is bounded both above and below by $g$ asymptotically.}, but for spanning trees with the fixed vertex degree sequence this problem is NP-complete (see Theorem \ref{th_NP} in the next section). Only in the special case of \textit{product-requirements} (the requirements that can be represented in a form $\mu_{ij}=\mu_i\cdot\mu_j$), the optimal spanning tree with the fixed vertex degree sequence 
can be effectively identified using the generalized Huffman algorithm \citep{goubko2016minimizing}. 

Due to its complexity, the study of OCSTP was concentrated on heuristics and on the implicit enumeration algorithms. The first exhaustive enumeration approach that used the backtrack programming was devised in \cite{scott1969optimal}. With the two early branch-and-bound schemes \citep{ahuja1987exact,dionne1979exact}, several individual instances of moderate size (up to $|V|=40, |E_s|=69$) were solved to optimality using the processing capacity available at that time. 

Despite the longstanding interest to OCSTP, just a few MIP formulations were proposed in the literature. Two different MILP formulations are due to 
\cite{fischetti2002exact}. The \textit{path-based formulation}\footnote{The notation is incoherent between authors. We follow the notation from \cite{masone2019minimum} for consistency.} involves the exponential number of variables and $\Theta(|E_s||R|)$ constraints. The \textit{flow-based formulation} is based on the most general (and probably the most popular) characterization of multicommodity network flows modelled using continuous flow variables $u_{ij}^{st}$ (the amount of the requirement $(s,t)\in R$ that goes through the edge $(i,j)\in E_s$). It involves $\Theta(|E_s||R|)$  variables and $\Theta(|E_s||R|)$ constraints. Recently 
\cite{zetina2019solving} managed to solve instances with up to 60 vertices using the tailored branch-and-cut scheme based on this model. In the \textit{reduced flow-based model} \citep{FERNANDEZ201385} the number of variables is scaled down to $\Theta(|E_s||V|)$ by aggregating flow variables sharing the same origin. This formulation is shown to have the weaker relaxation than the original flow-based model \citep{luna2016optimum}. 

Both the path-based and the flow-based model easily generalize to communication networks with cycles \citep{zetina2019exact}, unlike the \textit{rooted tree formulation} proposed in \cite{luna2016optimum}. This compact MILP model involves just $\Theta(|V|^2)$ variables with $\Theta(|V|^3)$ constraints, and its relaxation is compatible with that of the reduced flow-based formulation. 

The flow-based model is also used to solve the hop-constrained spanning tree problem, which is a variant of the constrained ORSTP where the network construction cost \citep{gouveia1996multicommodity} or a combination of construction and communication costs \citep{Pirkul03} are minimized over the set of spanning trees with the limited diameter. 

The constrained OCSTP is mainly examined in the context of the \textit{Wiener index} studies \citep{dobrynin2001wiener} in the mathematical chemistry.\footnote{The Wiener index coincides with the communication cost (\ref{eq_cost}) under $\mu_{ij}=1$ and $t_{ij}=1$.} Over the years, many lower bounds (including some tight bounds) are obtained for the Wiener index over different subdomains of trees, e.g., trees with the maximum degree \citep{FISCHERMANN2002127}, the given degree sequence \citep{zhang2008wiener}, the eccentricity sequence \citep{Dankelmann2020611}, or the segment sequence \citep{zhang2019wiener}) and general graphs (chemical graphs \citep{Knor2019119}, unicyclic graphs with the given girth \citep{yu2010wiener}, matching number \citep{du2010minimum}, etc.). Some analytical results of this theory generalize to ODSTP \citep{gao2015vertex,wang2012wiener} and ORSTP \citep{goubko2020lower}.

In the present article we combine computational tools of the combinatorial optimization and integer programming with the methods of algebraic graph theory to propose the novel high-performance MIP formulations for OCSTP with vertex degree constrains and to compare them with the traditional approaches.

\section{Methodology}

MIP formulations for OCSTP studied in the next section may have different limitations. Some of them are applied only to ORSTP, while the others hardly generalize to graphs with cycles. For the direct comparison of all formulations, a simplistic ORSTP over a complete graph is used as a common base.  Given a symmetric non-negative requirements matrix $A=(\mu_{ij})_{i,j=i\neq j}^n$, the problem is to find a tree $T$ over the vertex set $V=\{1,...,n\}$ that minimizes $\sum_{i,j\in V,i\neq j}\mu_{ij}d_T(i,j)$, where $d_{ij}(T)$ is the number of edges in the path between vertices $i$ and $j$ in $T$. To test the robustness of different MIP formulations with respect to additional constraints, we allow only for trees with the given vertex degree sequence $d=(d_1,...,d_n)$. So, the problem instance is defined by the requirements matrix $A$ and the vertex degree sequence $d$. 

Vertex degree constraints are quite natural in the context of ORSTP. For example, in the communication network topology design distances measure the ``hop count'' (the number of intermediate devices) between vertices in a network, routers have the limited number of ports, while client terminals support just a single connection to a router.  

Without vertex degree constraints ORSTP over the complete graph is solvable in polynomial time \citep{hu1974optimum}. However, addition of degree constraints makes it NP-complete.

\begin{theorem}\label{th_NP}
ORSTP with the fixed vertex degree sequence is NP-complete. 
\end{theorem}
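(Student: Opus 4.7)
The plan is to show membership in NP and then reduce a known NP-complete problem to the decision version of ORSTP with a fixed degree sequence, namely the question of whether, given $A$, $d$, and $K$, there exists a spanning tree with degree sequence $d$ whose communication cost is at most $K$. Membership in NP is clear: a candidate spanning tree is a certificate of size $O(n)$; we can verify in polynomial time that it is a tree, that its vertex degrees coincide with $d$, and, by all-pairs BFS, compute its cost and compare it with $K$.

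For hardness, I would reduce from the \emph{minimum linear arrangement} (MLA) problem, which is NP-complete (Garey, Johnson, and Stockmeyer, 1976): given a graph $H=\langle V,E_H\rangle$ on $n$ vertices and an integer $K$, decide whether there exists a bijection $\pi:V\to\{1,\dots,n\}$ with $\sum_{(u,v)\in E_H}|\pi(u)-\pi(v)|\le K$. Given such an MLA instance, build the ORSTP instance on the complete graph $G$ with vertex set $V$, requirements $\mu_{ij}=1$ if $(i,j)\in E_H$ and $\mu_{ij}=0$ otherwise, degree sequence $d=(1,2,2,\dots,2,1)$, and threshold $K$. The key observation is that a tree on $n$ vertices realizes the degree sequence $(1,2,\dots,2,1)$ if and only if it is a Hamiltonian path; for such a tree the natural labelling of vertices by their position in the path gives a bijection $\pi$ with $d_T(i,j)=|\pi(i)-\pi(j)|$ for every pair $i,j\in V$. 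Consequently, the ORSTP cost reduces to $\sum_{(i,j)\in E_H}|\pi(i)-\pi(j)|$, which is exactly the MLA objective, so the two decision problems have the same answer. The construction is polynomial in $n$, completing the reduction.

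The main obstacle I expect is only bookkeeping: one has to argue cleanly that (i) every labelled Hamiltonian path on $V$ arises from some bijection $\pi$, so that the minimum over ORSTP feasible trees matches the minimum over MLA permutations (and not a strict subset of them), and (ii) the reduction is symmetric in the two endpoints of the path, i.e.\ reversing $\pi$ yields the same ORSTP tree, which only halves the search space in a harmless way. Once these are addressed, NP-completeness follows directly, and the construction in fact shows something slightly stronger — that the difficulty already appears for the most restrictive non-star degree sequence, which complements the polynomial-time result of \cite{hu1974optimum} for the unconstrained case.
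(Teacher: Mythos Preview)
Your proof is correct in spirit and takes a genuinely different route from the paper. The paper reduces from \emph{balanced graph bisection}: it uses the degree sequence $(1,\dots,1,m{+}1,m{+}1)$ (two internal ``hubs'' plus $2m$ leaves), so that leaves attached to the same hub are at distance~$2$ and leaves attached to different hubs are at distance~$3$; the communication cost then equals a constant plus the weight of the cut separating the two halves of the leaf set. Your reduction from Minimum Linear Arrangement via the path degree sequence $(1,2,\dots,2,1)$ is equally natural and arguably more elegant, and it shows that hardness already appears for the sparsest nontrivial degree sequence rather than for double-stars.

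There is, however, one gap to close. In this paper the degree sequence is \emph{labelled}: the constraint is $\deg_T(i)=d_i$ for each specific vertex~$i$ (cf.\ the equality $\sum_j x_{ij}=d_i$ in every MIP formulation, and the paper's own proof, which explicitly assigns the leaf r\^ole to vertices $1,\dots,2m$). With $d=(1,2,\dots,2,1)$ this forces vertices $1$ and $n$ to be the endpoints of the path, so the feasible ORSTP trees correspond only to linear arrangements with $\{\pi(1),\pi(n)\}=\{1,n\}$, not to all bijections; the equivalence with MLA therefore fails as stated, and your point~(ii) about reversal symmetry does not address this. The cleanest repair is to adjoin two fresh vertices $n{+}1$, $n{+}2$ with degree~$1$ and zero requirement to every other vertex, and to give degree~$2$ to all original vertices $1,\dots,n$. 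The dummy vertices are then forced to the ends of the path while the MLA vertices range freely over the interior positions, and the objective equals (twice) the MLA cost. With this small correction your reduction goes through.
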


\begin{proof}
Let us consider a special case of ORSTP for trees with $2m$ leaves and just two internal vertices of equal degree. These trees have the degree sequence $d=(1, ..., 1, m+1, m+1)$. Let us assume that non-zero requirements exist only between leaves, according to the matrix $A=(\mu_{ij})_{i,j=1}^{2m}$. The distance $d_T(i,j)$ between the leaves $i$ and $j$ in such a tree $T$ is equal to 2 if they are connected to the same internal vertex, and is equal to 3 otherwise. 
ORSTP in this case reduces to the selection of a set of $m$ leaves $S\subseteq \{1,...,2m\}$ to be connected to the first internal vertex, while the rest of leaves are connected to the second internal vertex. The communication cost can be written as
$$\sum_{i,j\in V,i\neq j}\mu_{ij}d_T(i,j)=2\sum_{i,j=1,i\neq j}^{2m}\mu_{ij}+\text{cut}_A(S),$$
where $\text{cut}_A(S):=\sum_{i\in S, j\notin S} \mu_{ij}$ is the \textit{graph cut} of a weighted graph with $2m$ vertices and the adjacency matrix $A$.
Hence, the \textit{balanced graph bisection problem} $\min_{S:|S|=m}\text{cut}_A(S)$, which is known to be NP-complete \citep{Garey79}, reduces to the ORSTP with a special vertex degree sequence.

Conversely, the set of trees with the given vertex degree sequence is a subset of a wider set of all spanning trees. So, ORSTP with the fixed vertex degree sequence is NP, and, hence, is NP-complete.
\end{proof}

The present article aims at providing different MIP formulations and illustrating their performance, so we also need a common algorithmic base. We use a universal branch-and-bound scheme of a commercial MILP/MIQP solver (Gurobi 9.0, which is among the fastest MILP solvers of the moment that also support MIQP). Although substantial improvement can be obtained by designing custom algorithms for traditional formulations\footnote{For example, for the flow-based formulation, the hundredfold acceleration has been achieved in \cite{zetina2019solving} as compared to the state-of-the-art commercial solver (CPLEX 12)}, we believe that in the future similar efforts can accelerate the novel settings proposed in this article. For the same reason default Gurobi parameter values are used without any parameter tuning.

As an example of traditional MILP models, we consider the flow-based formulation, which is reported to be the most promising for standard solvers \citep{zetina2019solving}. In this article we do not employ the row/column generation, so we concentrate on the models with polynomial number of variables and constrains. For this reason we do not consider characterizations of spanning trees with Cut-set inequalities or with Subtour elimination constrains. We also exclude the path-based model from \cite{fischetti2002exact} due to the exponential number of variables. (It is also known to share the same continuous relaxation with the flow-based model \citep{fischetti2002exact}.) The reduced flow-based formulation is also excluded due to weaker continuous relaxations \citep{luna2016optimum}. 

In the flow-based formulation, the distance between vertices $s$ and $t$ is calculated implicitly as $\sum_{(i,j)\in E_s}t_{ij}u_{ij}^{st}$ from flow variables $u_{ij}^{st}$ (see Section~\ref{sect_flow} for details). Several recently appeared approaches that can explicitly model the distance matrix of a tree. As an example, we include the formulation based on the ideas from \cite{veremyev2015critical}, where distances are modeled with binary variables. Developed for the critical vertices detection, this model has, to our knowledge, never been applied to OCSTP, although it has been applied to some problems of netwrok design \citep{mukherjee2017minimum,diaz2019robust}.

In the compact rooted-tree formulation proposed in \cite{luna2016optimum} distances are modeled by continuous variables. We do not consider it because of its poorer relaxation \citep{luna2016optimum}. Instead, a novel, more straightforward MILP model is proposed, which also uses continuous distance variables and, despite the rooted-tree formulation, easily generalizes to graphs with cycles.

A yet another novel compact model with $\Theta(n^2)$ variables based on the recent characterization of Laplacian and distance matrices of trees by $\Theta(n^2)$ bilinear equations \citep{goubko2020bilinear} is developed and studied in Section~\ref{sect_LD}. 

We also include a brute-force approach that combines exhaustive enumeration of trees for internal vertices and  assignment of leaves from the solution of the quadratic assignment problem (QAP) under the fixed topology of the tree. It is used as a baseline when the other formulations are compared.

The latter two formulations reduce to MIQP. Typically, MIQP models are more compact. They involve less variables and constrains and become an attractive alternative to MILP in the view of recent advances in exact algorithms for convex and non-convex MIQP.

The efficiency of the branch-and-bound algorithm considerably increases when a promising candidate solution is provided. So we compare general-purpose heuristic algorithms for candidate solution generation provided by Gurobi with the custom local search procedure proposed in \cite{goubko2020lower}.

Communication spanning trees resemble the intrinsic structure of requirements by placing vertices $i$ and $j$ closer in a tree when the requirement $\mu_{ij}$ is high and divorcing the vertices with lower communication requirements. If elements of the requirements matrix $A$ are taken from realizations of independent random variables, requirements inherit no internal structure. Therefore, the use of completely random matrices in computational experiments with OCSTP algorithms might be irrelevant. To obtain more reasonable results, we generate connected requirements matrices of various size from the collection of eight distinct origin-destination matrices from public and private transportation (water, ground, and air). See \cite{goubko2020lower} for the detailed description of data sets.



\section{Compendium of MIP models}\label{sect_mip}

In this section we introduce MIP formulations and algorithms, while their comparison on the basis of computational experiments is postponed to  Section~\ref{sect_comp}. 

Since modern solvers (e.g., Gurobi\textsuperscript{TM} or Fico\textsuperscript{TM} Xpress) can handle both quadratic and linear MIP formulations, in  Section~\ref{sect_miqp} we start with MIQP and MIQCP (mixed-integer quadratically constrained program) models for the constrained OCSTP.

First in  Section~\ref{sect_brute} we introduce a rather straightforward enumerative approach, which combines exhaustive enumeration of tree topologies for internal vertices and assignment of leaves from the solution of QAP under the fixed topology of the tree. The idea is to show how OCSTP can be solved without developing any problem-specific approach but by combining two highly efficient general-purpose algorithms (enumeration of labeled trees using Pr{\"u}fer codes and QAP), which requires minimum additional coding. Any tailored algorithm should at least outperform this ideologically and technically simple approach.

    Then in  Sections~\ref{sect_LD} and \ref{sect_quad_binary} we present two different MIQCP approaches (based on integral and binary distance variables, respectively) which allow to model distances in a tree taking into account algebraic properties of this class of graphs. These novel MIQCP formulations have never been applied to OCSTP, and their comparative performance for this problem is the main concern.  

We proceed to MILP formulations in  Section~\ref{sect_milp}.  Section~\ref{sect_flow} introduces the classical multicommodity flow-based model, which lies in the core of most state-of-the-art MIP approaches to OCSTP. Its computation time is the main baseline for the performance measurement of all novel MIQP and MILP formulations. In  Sections~\ref{sect_f1l} and \ref{sect_f2l} we introduce two novel MILP formulations. The first formulation uses integral variables to model distances in a tree while the second one employs binary variables. 

The latter two formulations demonstrate superior performance in our computational experiments. Also, although they differ in the number of variables and constraints, the experiments indicate that none of them clearly dominates the other by its speed. 

In all formulations considered below, binary variables $x_{ij}$ model allowable edges $(i,j)\in E_s$. Therefore, $x_{ij}=1$ if an allowable edge $(i,j)\in E_s$ is included in the spanning tree, and $x_{ij}=0$ otherwise. In this article we focus only on undirected graphs, so $x_{ij}=x_{ji}$ for all $(i,j)\in E_s$, and variables $x_{ij}$ for $i>j$ are redundant. However, to simplify notation we use them keeping in mind that a solver will reduce them to $x_{ji}$ at the preprocessing stage. In Conclusion we discuss how our results extend to directed graphs. 

Clearly, we consider only \textit{arborescent} vertex degree sequences\footnote{Those having $d_1+...+d_n=2(n-1)$.}, which are compatible with the structure of graph $G$, so that $d_i\le |N_G(i)|$ for any vertex $i \in V$, where $N_G(i)$ is the neighborhood of the vertex $i$ in a graph $G$. Also, until a connected graph with at least three vertices is sought, we can discard edges between leaves and assume that if $d_i = d_j= 1$ then $(i,j)\notin E_s$. Moreover, any vertex of a connected graph is connected to at least one internal vertex, so we use inequalities $\sum_{j:d_j>1} x_{ij}\ge 1$, $i\in V$ to limit the search space. For the same reason, at most one leaf vertex can be connected to a vertex of degree 2, so if $d_i=2$ then $\sum_{j:d_j=1}x_{ij}\le 1$. These inequalities allow to further limit the search space.

This article focuses on the search of spanning trees. The sole path connects any pair of vertices in a tree, which can also be used to limit the search space. For example, for any edge $(i,j)$ in a tree the shortest path either from the vertex $i$ or from the vertex $j$ to an arbitrary vertex $k\in V$ traverses the edge $(i,j)$. Some formulations (e.g., see  Section~\ref{sect_LD}) explicitly use the fact that we seek for a tree. In Conclusion we discuss the price of extending the considered formulations to graphs with loops. 

Since we compare formulations on the example of ORSTP, the graph distance between vertices is just the number of edges in a path, so all distances are integral. If the degree sequence $d$ has $m$ internal vertices (i.e., components with $d_i>1$), distances belong to the range $\{1,..,L\}$, where $L := m+2$ is the maximum possible diameter of a spanning tree with the given degree sequence.  In Conclusion we discuss how to extend the formulations to the general OCSTP with integral and continuous edge weights $t_{ij}$, $(i,j)\in E_s$.  

All novel formulations presented in this article follow the common \textit{distance-based approach}. The distance between vertices $i,j\in V$ is modeled explicitly either using an integral variable $d_{ij}$ in MIQCP formulation of  Section~\ref{sect_LD} and in MILP formulation of  Section~\ref{sect_f1l} or using a binary variable $w_{ij}^{(\ell)}$ in MIQCP formulation of  Section~\ref{sect_quad_binary} and in MILP formulation of  Section~\ref{sect_f2l}; $w_{ij}^{(\ell)}=1$ if and only if there is a path of length at most $\ell$ from the vertex $i$ to the vertex $j$ in the tree.


\subsection{MIQP formulations}\label{sect_miqp}
In this section we consider three MIQP formulations for OCSTP. 

First in  Section~\ref{sect_brute} an enumerative algorithm is introduced, where all admissible tree topologies over internal vertices are sequentially enumerated, and QAP is solved for every specific topology to assign leaves. This brute-force approach stands apart from all other formulations studied in this article, since it reduces to a collection of MIQPs (one for each topology of internal vertices) while all other formulations reduce to a single MIP. Although brute-force algorithms were not treated seriously in the literature until the very early publications \citep{scott1969optimal}, the dramatic progress in software and hardware and, in particular, in parallel computing, makes us re-investigate this approach, also taking advantage of the recent advances in QAP algorithms and heuristics implemented in Gurobi solver. As mentioned above, any tailored algorithm should at least outperform this simple and easily parallelized routine. 

The second formulation presented in  Section~\ref{sect_LD} employs recent results from algebraic graph theory. It is shown in \cite{goubko2020bilinear} that a certain system of bilinear equations in $x_{ij}$ and $d_{ij}$ variables of a graph holds if and only if this graph is a tree. These equations can be used as equality constraints in many problems of tree topology design.

The third formulation (see  Section~\ref{sect_quad_binary}) uses binary variables that indicate if a path of at most certain length exists between a pair of vertices in a graph. Quadratic constraints model recursive relations between these variables in a tree.

%

\subsubsection{Enumeration of defoliated-trees and QAP}\label{sect_brute}

Let $\mathcal{T}(d)$ be the set of trees over the vertex set $V=\{1,...,n\}$ that share a (non-decreasing) vertex degree sequence $d=(d_1, ..., d_n)$ with $n_1$ \textit{leaves} (i.e., vertices with unit degree) and $m=n-n_1$ \emph{internal} vertices. If $T$ is a tree built over the set of internal vertices $\{n_1+1, ..., n\}$, and
\begin{equation}\label{eq_defoliated_degrees} 
d_T(i) \le d_i\text{ for all }i = n_1+1, ..., n,
\end{equation}
then $T$ gives rise to the collection of trees $\mathcal{T}_T\subset\mathcal{T}(d)$ obtained from $T$ by connecting $d_i - d_T(i)$ distinct leaves from the set $\{1, ..., n_1\}$ to every internal vertex $i = n_1+1, ..., n$. 

Since $\mathcal{T}(d)$ is a union of sets $\mathcal{T}_T$ for all such ``defoliated trees'' $T$, we have $\min_{G \in\mathcal{T}(d)} C_A(G) = \min_T \min_{G\in\mathcal{T}_T } C_A(G)$. For the given defoliated tree $T$ the distance matrix $D(T):=(d_{k\ell})_{k,\ell=1}^m$ is fixed, and the problem reduces to the optimal assignment of $n_1$ leaves to $m$ internal vertices, which is a standard QAP. Below $x_{ik}=1$ if a leaf $i$ is connected to a internal vertex $k$, and $x_{ik}=0$ otherwise.

\begin{problem}[F0Q]\label{MIQP0_f}
\begin{subequations}
\begin{align}
&\mbox{\emph{min}} \quad \sum\limits_{k,\ell=1:k\neq \ell}^m \left[2(d_{k\ell}+2)\sum\limits_{i,j=1:i<j}^{n_1}\mu_{ij}x_{ik}x_{jl} + 2(d_{k\ell}+1)\sum\limits_{i=1}^{n_1}\mu_{i,n_1+\ell}x_{ik}\right]\label{f0q_obj} \\
&\mbox{subject to} \nonumber\\
&\sum\limits_{k=1}^m x_{ik}=1 &\hspace{-10mm} \forall i=1,...,n_1,\label{f0q_xij_2}\\
&\sum\limits_{i=1}^{n_1}x_{ik}=d_k-d_T(k) &\hspace{-10mm} \forall k=n_1+1,...,n\label{f0q_xij_1}.
\end{align}
\end{subequations}
\end{problem}

According to the Cayley formula, the number of different defoliated trees grows superexponentially with the number of internal vertices $m$, however, for relatively small $m$ (namely, $m\le8$), probably the simplest approach to the optimal connecting tree problem is to enumerate all defoliated trees that satisfy Eq. (\ref{eq_defoliated_degrees}) and to find the best leaf assignment from the solution of the appropriate QAP. 
We used Pr{\"u}fer codes to economically enumerate all defoliated trees for the given degree sequence $d$, and the current record was used to cut off the computation of the next QAP in Gurobi.


\subsubsection{Formulation based on the bilinear matrix equation for trees}\label{sect_LD}

The next formulation is based on the matrix equation that involves Laplacian and distance matrix of a tree (Lemma 8.8 in \cite{bapat2010graphs}). Formally, if $T$ is a tree with the adjacency matrix $X = (x_{ij})_{i,j=1}^n$ and the distance matrix $D$, then the following matrix equation holds: 
\begin{equation}
\label{eq_LD}
\mathcal{L}D+2I=2(\mathbbm{1}-2d)\mathbbm{1}^T
\end{equation}
where $\mathcal{L}:=\diag d -X$ is the Laplacian matrix of the tree $T$, $d$ is its vertex degree sequence, $\diag d$ is the diagonal matrix with vertex degrees on its diagonal, while $I$ is an identity matrix and $\mathbbm{1}$ is an all-ones column vector of dimension $n$.

To illustrate the intuition behind this matrix equation and to write the corresponding MIQCP, we state and prove the following proposition:

\begin{proposition}\label{prop_LD_dir}
Let $T$ be a tree, then for any pair of vertices $i,j$ the following equality holds:
\begin{equation}
\label{distance_degree_eq}
\sum_{k\in N_T(i)} d_{kj}-d_id_{ij}=d_i-2.
\end{equation}
\end{proposition}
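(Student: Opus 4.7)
The plan is to exploit the unique-path property of trees and partition the neighbors of $i$ into the one that lies on the $i$-$j$ path and the rest. Note first that when $i=j$ one has $d_{ij}=0$ and $\sum_{k\in N_T(i)} d_{kj} = d_i$, so the identity would read $d_i = d_i - 2$ and fail; hence the claim is implicitly restricted to $i\neq j$, which I shall assume throughout.

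First, I would introduce $k^{\ast}$ as the second vertex on the unique path from $i$ to $j$ in $T$. This is a well-defined neighbor of $i$, and truncating the path at its first step gives immediately
$$d_{k^{\ast}j} = d_{ij}-1,$$
covering also the degenerate subcase $j = k^{\ast}$, where both sides are zero.

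The main step is to handle every remaining neighbor $k\in N_T(i)\setminus\{k^{\ast}\}$. I would argue that the unique $k$-$j$ path must pass through $i$: otherwise, prepending the edge $(i,k)$ to such a path would produce an $i$-$j$ walk avoiding $k^{\ast}$, yielding a second $i$-$j$ path and contradicting the tree property. Therefore $d_{kj} = d_{ij}+1$ for each such $k$.

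Summing the $d_i$ resulting distances and isolating $d_{ij}$ finishes the proof:
$$\sum_{k\in N_T(i)} d_{kj} \;=\; (d_{ij}-1) + (d_i-1)(d_{ij}+1) \;=\; d_i\,d_{ij} + d_i - 2.$$
The only substantive ingredient is the tree-uniqueness argument used to classify non-$k^{\ast}$ neighbors; once that is in place, the remainder is one line of arithmetic, so I do not anticipate a serious obstacle.
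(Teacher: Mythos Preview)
Your argument is correct and mirrors the paper's proof almost exactly: the paper also singles out the unique neighbor $q$ of $i$ on the $i$--$j$ path (your $k^{\ast}$), uses $d_{qj}=d_{ij}-1$ and $d_{kj}=d_{ij}+1$ for the remaining neighbors, and performs the same one-line summation. Your version is slightly more explicit about the $i=j$ exclusion and the uniqueness-of-paths justification, but the approach is the same.
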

\begin{proof}
Although this fact follows immediately from the matrix equation \eqref{eq_LD}, we provide a simple and intuitive proof based on the observation that will be used further in this article. Let us consider a pair of non-adjacent vertices $i$, $j$, and observe that the shortest path in a tree from the vertex $i$ to $j$ must go through exactly one neighbor of the vertex $i$ (say, $q$), hence $d_{qj}=d_{ij}-1$. Moreover, the shortest paths from the other neighbors (if they exist) of the vertex $i$ to the vertex $j$ go through the vertex $i$, and distance from them to the vertex $j$ is $d_{ij}+1$, i.e., $d_{ik}=d_{ij}+1$ for any $k\in N_T(i)\setminus \{q\}$. 
Therefore, 
$$
\sum_{k\in N_T(i)} d_{kj}=d_{qj}+\sum_{k\in N_T(i)\setminus \{q\}} d_{kj}=d_{ij}-1+(d_i-1)(d_{ij}+1)=d_id_{ij}+d_i-2.
$$
In case of adjacent vertices $i$ and $j$, $q=j$, and we can use the same argumentation.
\end{proof}

By introducing explicitly distance variables $d_{ij}$ for all $i,j\in V$, we obtain the following formulation:

\begin{problem}[F1Q]
\label{MIQP1_f}
\begin{subequations}
\begin{align}
&\min\quad C_A(T)=
\sum\limits_{i,j=1:i\neq j}^n \mu_{ij} d_{ij} \label{f1q_obj} \\
&\mbox{subject to} \nonumber\\
&\sum\limits_{k\neq j:(i,k)\in E_s}x_{ik}d_{kj}-d_id_{ij}  =d_i-2 &\hspace{-10mm} \forall i,j\in V, i \neq j,\label{f1q_dij_2}\\
&\sum\limits_{j: (i,j)\in E_s} x_{ij} = d_i, &\forall i\in V,\\
&x_{ij} = x_{ji}\in \{0,1\}, &\forall (i,j)\in E_s,i<j,\label{f1q_symmx}\\
&d_{ij}=d_{ji} \in \{1,\dots, L\}, &\forall i,j\in V, i<j \label{f1q_dij_var}.
\end{align}
\end{subequations}
\end{problem}

This formulation uses $|V||V-1|$ integral variables, $|E_s|$ binary variables and $|V||V-1|$ bilinear constraints, which is not a lot for network design problems that involve pairwise distances between vertices.  

The validity of this formulation is justified in \cite{goubko2020bilinear}. In fact, the converse to Proposition \ref{prop_LD_dir} is proved: if the equation \eqref{eq_LD} holds for some graph with an adjacency matrix $X$ and a vertex degree sequence $d$, and for some matrix $D$, then this graph is a tree, and if $D$ has a zero diagonal, then $D$ is the distance matrix of this tree. Let us note that the constraints \eqref{f1q_dij_2} in formulation {\bf F1Q} are just the non-diagonal entries of the matrix equation \eqref{eq_LD} (according to  \cite{goubko2020bilinear}, the condition \eqref{f1q_dij_var} on variables $d_{ij}$ is enough to ensure that the obtained solution is a tree and $D$ is a distance matrix).

It is also shown in \cite{goubko2020bilinear} that if the matrix $D$ is assumed symmetric and the matrix equation (\ref{eq_LD}) is limited to its upper triangular part, the same conclusion about the arboricity of the graph still can be made. This means that the formulation {\bf F1Q} is also valid if equations \eqref{f1q_dij_2} for $i>j$ are removed and all occurences of distance variables $d_{ij}$ for $i>j$ are replaced with $d_{ji}$. 

In computational experiments below we denote this refined formulation by {\bf F1Q$^\urcorner$}. Although {\bf F1Q$^\urcorner$} has fewer variables and constraints, {\bf F1Q} might still have a tighter relaxation and so, is worth to consider.

Although equality constraints in (\ref{f1q_dij_2}) involving binary variables $x_{ik}$ are non-convex, they can be converted into linear constraints \citep{GurobiMIBP} by introducing one auxiliary integral variable and adding $4$ linear inequality constraints for each of $\Theta(|E_s||V|)$ independent bilinear terms $x_{ik}d_{kj}$. The problem thus becomes a MILP, which is efficiently handled by the modern optimization software like Gurobi. 

Finally, let us note that this formulation easily extends to the general OCSTP where graph edges have arbitrary lengths $t_{ij}$, $(i,j)\in E_s$ by replacing the equality constraint \eqref{f1q_dij_2} with its weighted version from \cite{goubko2020bilinear}  
\begin{equation*}
\sum\limits_{k\neq j:(i,k)\in E_s}\frac{x_{ik}}{t_{ik}}d_{kj}-\sum\limits_{k\neq j:(i,k)\in E_s}\frac{x_{ik}}{t_{ik}}d_{ij} =d_i-2, \hspace{10pt} \forall i,j\in V, i \neq j,\tag{\ref{f1q_dij_2}$'$}
\end{equation*}
and relaxing the integrality constraint \eqref{f1q_dij_var}:
\begin{equation*}
d_{ij}=d_{ji} \ge t_{ij}, \hspace{10pt} \forall i,j\in V, i<j. \tag{\ref{f1q_dij_var}$'$}
\end{equation*}

\subsubsection{Bilinear distance-based formulation with binary distance variables}\label{sect_quad_binary}

The next MIQCP formulation uses binary variables $w_{ij}^{(\ell)}$ that indicate whether a path of length at most $\ell$ exists from the vertex $i$ to the vertex $j$ and employs recursive constraints to ensure that these variables describe distances in a tree. A similar technique has proved its efficiency in graph fragmentation \citep{veremyev2015critical,veremyev2019finding} and subgraph detection \citep{Veremyev11,matsypura2019exact,kim2020maximum} problems that require modeling of pairwise distances between graph vertices. In \cite{mukherjee2017minimum,diaz2019robust} this approach has also been applied to the problems of network design. However, only MILP formulations were studied in the literature. Below we introduce an MIQCP version of this approach with bilinear constraints.

Thus, $w^{(\ell)}_{ij}=1$ if there is a path of length at most $\ell$ from vertex $i$ to $j$ in a tree $T$, and $w^{(\ell)}_{ij}=0$ otherwise, $i,j=1,\dots,n,\, i\neq j; \ell=1,\dots,L$, where $L=m+2$ is the maximum possible diameter of a spanning tree with the vertex degree sequence $d$ for the graph $G=\langle V, E_s\rangle$.
It is clear that $\sum_{\ell=1}^L\left(1-w^{(\ell)}_{ij}\right)=d_{ij}(T)-1$. Taking into account that the path of the length $L$ always exists, and so $w^{(L)}_{ij}\equiv 1$, we deduce that $d_{ij}(T) = L-\sum_{\ell=1}^{L-1}w^{(\ell)}_{ij}$, and the cost function (\ref{eq_cost}) of the ORSTP for the tree $T$ can be written as

\begin{equation}
C_A(T)=
\sum\limits_{i,j=1:i\neq j}^n \mu_{ij}d_{ij}(T)=\sum\limits_{i,j=1:i\neq j}^n \mu_{ij}\left(L-\sum\limits_{\ell=1}^{L-1} w^{(\ell)}_{ij}\right),
\end{equation}

Then, the constrained ORSTP can be written as: 

\begin{problem}[F2Q]
\label{MIQP2_f}
\begin{subequations}
\begin{align}
&\min \quad C_A(T)=
\sum\limits_{i,j=1:i\neq j}^n \mu_{ij}\left(L-\sum\limits_{\ell=1}^{L-1} w^{(\ell)}_{ij}\right) \label{f2q_obj} \\
&\mbox{subject to} \nonumber\\
& w^{(1)}_{ij}= x_{ij}, 
&\hspace{-40mm} \forall(i,j)\in E_s\label{f2q_w1_1}\\
& w^{(1)}_{ij}= 0, &\hspace{-40mm} \forall(i,j) \notin E_s, i< j \label{f2q_w1_2}\\
& w^{(\ell)}_{ij}\leq x_{ij}+\sum\limits_{k\neq j:(i,k)\in E_s}x_{ik}w^{(\ell-1)}_{kj}, &\hspace{-30mm}\forall (i,j)\in E_s, \ i< j, \ell\in\{2,\dots,L\}\label{f2q_wl_1}\\
& w^{(\ell)}_{ij}\leq \sum\limits_{k\neq j:(i,k)\in E_s}x_{ik}w^{(\ell-1)}_{kj}, &\hspace{-40mm}\forall (i,j)\notin E_s, i< j,\ \ell\in\{2,\dots,L\}\label{f2q_wl_2}\\
& w^{(L)}_{ij}=1 \label{f2q_wL}, &\hspace{-40mm}\forall i,j\in V, i< j\\
&\sum\limits_{j: (i,j)\in E_s} x_{ij} = d_i, &\forall i\in V\label{f2q_deg}\\
&x_{ij}=x_{ji} \in \{0,1\}, &\forall (i,j)\in E_s, i<j\\
&w^{(\ell)}_{ij}=w^{(\ell)}_{ji}\ \in \{0,1\}, & \hspace{-70mm} \forall i,j\in V, i< j, \ell\in\{1,\dots,L\}.
\end{align}
\end{subequations}
\end{problem}

In contrast to {\bf F1Q}, this formulation involves $\Theta(L|V|^2)$ binary variables and $\Theta(L|V|^2)$ bilinear constraints. Constraints \eqref{f2q_w1_1}-\eqref{f2q_wL} recursively model distances in the tree $T$. First, constraints \eqref{f2q_w1_1}-\eqref{f2q_w1_2} ensure that if an edge $(i,j)\in E_s$ is present in the tree $T$ ($x_{ij}=1$), then the distance variable $w^{(1)}_{ij}=1$, otherwise $w^{(1)}_{ij}=0$. Then, constraints \eqref{f2q_wl_1}-\eqref{f2q_wl_2} aim at ensuring that if there is no path of length at most $\ell-1$ between any neighbor of vertex $i$ and vertex $j$, then, unless an edge $(i,j)$ is not in a tree, there is no path of length $\ell$ between vertices $i$ and $i$ in the selected tree $T$ (i.e., $w^{(\ell)}_{ij}$ is forced to be 0). Constraints \eqref{f2q_wL} guarantee that any pair of vertices in the selected tree $T$ is connected and have the distance at most $L$. They can also be used to additionally limit the diameter of admissible trees. 

Note that the validity of this formulation can be easily verified due to its intuitive interpretation of variables and constraints. Moreover, the integrality of variables $w^{(\ell)}_{ij}$ can be relaxed. Below we provide a formal proof. 

\begin{proposition}
\label{prop_validity_F2Q}
There exists an optimal solution {\bf x$^*$, w$^*$} of the problem {\bf F2Q} with relaxed variables {\bf w$^*$} \emph{(}i.e., $w^{(\ell)}_{ij}=w^{(\ell)}_{ji} \in [0,1]$ $\forall i,j\in V, i< j, \ell\in\{1,\dots,L\}$\emph{)}, such that variables ${\bf x^*}$ define an optimal tree $T^*$ and the objective value of {\bf F2Q} is equal to its cost $C_A(T^*)$.
\end{proposition}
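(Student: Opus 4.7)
The plan is to exploit monotonicity of the objective \eqref{f2q_obj} in the $w$ variables. Since every $\mu_{ij}\geq 0$ and each variable $w^{(\ell)}_{ij}$ appears in the objective with a non-positive coefficient and only on the left-hand sides of upper-bound inequalities \eqref{f2q_wl_1}-\eqref{f2q_wl_2} (apart from the box constraint $w^{(\ell)}_{ij}\in[0,1]$), minimizing the objective is equivalent to pushing each $w^{(\ell)}_{ij}$ up to the largest value permitted by the constraints. Fixing any binary $x$ satisfying \eqref{f2q_deg}, I would then compute this largest value recursively in $\ell$, which keeps the analysis tractable even though the constraints are bilinear in $x$ and $w$: because $x$ stays binary, the $w$-feasibility region is polyhedral for every fixed $x$.

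Specifically, I would prove by induction on $\ell$ that the pointwise maximum feasible value is $w^{(\ell)*}_{ij}(x)=\mathbbm{1}[d_H(i,j)\leq \ell]$, where $H$ is the subgraph of $G$ selected by $x$. The base case $\ell=1$ is immediate from \eqref{f2q_w1_1}-\eqref{f2q_w1_2}. For the inductive step, using that $x_{ik},\,w^{(\ell-1)*}_{kj}\in\{0,1\}$, the right-hand side of \eqref{f2q_wl_1}/\eqref{f2q_wl_2} is a non-negative integer; it is at least $1$ exactly when either $(i,j)\in H$ or some $H$-neighbor $k\neq j$ of $i$ already satisfies $d_H(k,j)\leq \ell-1$, which by an elementary shortest-path argument is equivalent to $d_H(i,j)\leq \ell$. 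Combined with the relaxed domain $w^{(\ell)}_{ij}\in[0,1]$, the pointwise maximum is exactly the indicator claimed, and the symmetry $w^{(\ell)*}_{ij}=w^{(\ell)*}_{ji}$ follows from symmetry of $d_H$.

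With this identification in hand, constraint \eqref{f2q_wL} forces $d_H(i,j)\leq L$ for every pair $i\neq j$, so $H$ must be connected, and together with \eqref{f2q_deg} (whose entries sum to $2(n-1)$) this shows $H$ is a spanning tree $T$. Telescoping $\sum_{\ell=1}^{L-1}\mathbbm{1}[d_T(i,j)\leq\ell]=L-d_T(i,j)$ and substituting the pointwise maxima into the objective produces an objective value equal to $C_A(T)$; hence the relaxed problem reduces to minimizing $C_A(T)$ over spanning trees with degree sequence $d$. Taking $T^*$ optimal and setting $w^{(\ell)*}_{ij}:=\mathbbm{1}[d_{T^*}(i,j)\leq \ell]$ then witnesses the required optimal solution of \textbf{F2Q} with relaxed $w$. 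The main delicacy will be cleanly carrying the shortest-path case analysis through the inductive step---distinguishing the unique neighbor of $i$ lying on the $i$--$j$ path in $H$ from the remaining neighbors---while keeping the relaxation-vs-integer bookkeeping transparent; the rest is arithmetic.
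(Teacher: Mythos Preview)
Your proof is correct and follows essentially the same route as the paper: both arguments use induction on $\ell$ to show that any feasible $w^{(\ell)}_{ij}$ is bounded above by the indicator $\mathbbm{1}[d_H(i,j)\le\ell]$, invoke constraint \eqref{f2q_wL} to force connectedness of the graph $H$ selected by $x$, and then exploit the monotonicity of the objective in $w$ to conclude that the optimum is attained at the indicator values with objective equal to $C_A(T)$. Your framing---fix $x$, compute the layer-by-layer pointwise maximum of $w$, then optimize over $x$---is a clean reorganization of the paper's proof, which instead starts from an arbitrary optimal pair $(\mathbf{x}^*,\mathbf{w}^*)$ and derives the same inductive bound and the same indicator replacement $\mathbf{w}'$ directly; one small imprecision in your opening sentence (the $w$ variables do appear on the right-hand sides of \eqref{f2q_wl_1}--\eqref{f2q_wl_2} at the next level, with nonnegative coefficients) is harmless because your recursive computation handles this coupling correctly.
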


\begin{proof}
The problem {\bf F2Q} is feasible since for any admissible spanning tree $T=\langle V,E\rangle$ one can construct a feasible solution {\bf x, w} by setting $x_{ij}=1$ for all $(i,j)\in E$ and $w_{ij}^{(\ell)}=w_{ji}^{(\ell)}=1$ for all $i,j\in V, i<j, \ell\ge d_T(i,j)$, and filling the rest of variables to zero.

Let us consider an optimal solution {\bf x$^*$, w$^*$}, and let $T^*$ be a subgraph of $G=\langle V, E_s\rangle$ that contains edges $(i,j)\in E_s$ for which $x^*_{ij}=1$. We first show that $T^*$ is a tree. Since the vertex degree sequence $d$ is arborescent, constraints \eqref{f2q_deg} assure that $T^*$ has exactly $n-1$ edges, and we only need to show that subgraph $T^*$ is connected. 

We prove even a stronger fact that if $w^{*(\ell)}_{ij}>0$, then the distance between nodes $i$ and $j$ in $T^*$ is at most $\ell$ (i.e., $d_{ij}(T^*)\leq \ell$). This can be done by induction on $\ell$. For the base of induction, $\ell=1$, this fact is obvious due to constraints \eqref{f2q_w1_1}. Now assume that the statement is true for some $\ell=\tau$, $\tau\in\{2,\dots,L\}$, i.e, if  $w^{*(\tau)}_{ij}>0$, then $d_{ij}(T^*)\leq \tau$. Let us prove that it is also true for $\ell=\tau+1$. 

Suppose that there exists a pair of nodes $a<b\in V$ such that $w^{*(\tau+1)}_{ab}=w^{*(\tau+1)}_{ba}>0$. We need to show that $d_{ab}(T^*)\leq \tau+1$. Let us consider the case when $(a,b)\in E_s$. Note that the constraint \eqref{f2q_wl_1} implies that
$x_{ab}+\sum\limits_{k\neq b:(a,k)\in E_s}x^*_{ak}w^{(\tau)}_{kb}>0$, which means that $x^*_{ab}=1$, or there exists a node $k\in V$, such that $x^*_{ak}=1$ and $w_{kb}^{*(\tau)}>0$. By the induction assumption it means that either $d_{ab}(T^*)=1\leq \tau$ or  there exists a neighbor $k$ of node $i$ in $T^*$, such that $w_{kb}^{*(\tau)}>0$ implying that $d_{kb}(T^*)\leq \tau$, hence $d_{ab}(T^*)\leq \tau+1$.
Similarly, for $(a,b)\notin E_s$ we can use the constraint \eqref{f2q_wl_2} and also conclude that $d_{ab}(T^*)\leq \tau+1$. Therefore, from identities \eqref{f2q_wL} it follows that $d_{ij}(T^*)\leq L$ for any pair of nodes $i,j\in V$ and $T^*$ is connected (and so, is a tree).

Since we proved that $T^*$ is a tree, let us define {\bf w$'$} = $\{w'^{(\ell)}_{ij}:\forall i\neq j\in V, \ell\in\{1,\dots,L\}\}$ as follows. For any pair $i\neq j\in V$:
\begin{equation}
w'^{(\ell)}_{ij}=
\begin{cases}
1, \text{ if } d_{ij}(T^*)\leq \ell, \\
0, \text{ if } d_{ij}(T^*)> \ell. 
\end{cases}
\end{equation}
One can easily verify that {\bf x$^*$, w$'$} is a feasible solution of {\bf F2Q} since it satisfies all the constraints. In addition, since we proved that $w^{*(\ell)}_{ij}>0$ implies that $d_{ij}(T^*)\leq \ell$, we can conclude also that $w^{*(\ell)}_{ij}\leq w'^{(\ell)}_{ij}$,  $\forall i,j\in V, i\neq j, \ell\in\{1,\dots,L\}$.

Moreover, by definition of {\bf w$'$} it follows that $C_A(T^*)=
\sum\limits_{i,j=1:i\neq j}^n \mu_{ij}\left(L-\sum\limits_{\ell=1}^{L-1} w'^{(\ell)}_{ij}\right)$ and, since the objective function \eqref{f2q_obj} is monotone, 
$$\sum\limits_{i,j=1:i\neq j}^n \mu_{ij}\left(L-\sum\limits_{\ell=1}^{L-1} w'^{(\ell)}_{ij}\right)\leq \sum\limits_{i,j=1:i\neq j}^n \mu_{ij}\left(L-\sum\limits_{\ell=1}^{L-1} w^{*(\ell)}_{ij}\right).$$ 
Since, by assumption, {\bf x$^*$, w$^*$} is the optimal solution of the problem {\bf F2Q}, this inequality becomes an equality, and the objective of {\bf F2Q} gives exactly the optimal cost $C_A(T^*)$. 

Hence, we showed that an optimal solution of {\bf F2Q} corresponds to a tree and its objective value is the communication cost \eqref{eq_cost} of this tree. To verify that this tree is optimal, i.e., has the minimum communication cost among all admissible trees, one can easily observe that for any admissible tree we can construct the feasible solution of {\bf F2Q} such that its objective value is equal to the communication cost of that tree, which completes the proof.
\end{proof}

This proof uses monotonicity of the cost function \eqref{eq_cost} with respect to $w_{ij}^{(\ell)}$. At the same time, by introducing additional inequality constraints this formulation can be correctly extended to non-monotone cost functions, which, however, goes beyond the scope of the present article.

Similarly to {\bf F1Q}, the formulation {\bf F2Q} can be converted into MILP by introducing an auxiliary variable and $4$ additional linear constraints for each bilinear term $x_{ik}w_{kj}^{(l-1)}$ in (\ref{f2q_wl_1}) and (\ref{f2q_wl_2}). This transformation is performed automatically by Gurobi 8.0 and higher \citep{GurobiMIBP}.

At first sight, {\bf F2Q} may seem a reformulation of {\bf F1Q} using binary variables for distances. However, it grounds on a different idea. To see the difference, please note that {\bf F2Q} easily generalizes to graphs with loops, where the distance between vertices becomes the shortest-path distance. On the other hand, as conjectured in \cite{goubko2020bilinear}, the formulation {\bf F1Q}, when generalized to graphs with loops, resembles another concept of distances in a graph, known as the \textit{resistance distance}. 

\subsection{Linear MIP formulations}\label{sect_milp}

\subsubsection{Flow-based formulation}\label{sect_flow}



Appeared in 1962, more than half a century ago, the seminal book ``Flows in networks'' by L.\,R~Ford and D.\,R.~Fulkerson \citep{fordflows} laid in the ground of the new branch of discrete mathematics, the \textit{network optimization}. The algorithms presented in this book belong to the classics of combinatorial optimization. The magic of elegant and efficient models of network flows motivated researchers to use their recipes in many problems of network optimization. Since the minimum length of the path traversed by a commodity flow between two vertices in a network is equal to the shortest-path distance between these vertices, the multicommodity flow-based formulation becomes a natural modeling tool for many distance-related problems of network optimization. So, it is no wonder that it remains the most popular MILP formulation for OCSTP \citep{fischetti2002exact,zetina2019exact}, for the hop-constrained spanning tree \citep{gouveia1996multicommodity,Pirkul03,Botton11}, and for many other problems of network analysis and design.

Although many variations of a flow-based formulation exist (see Section~\ref{sect_review}), here we present the simplest one, which reflects its essence. Let $u^{st}_{ij}\in [0,1]$ for all $s,t\in V$ and $(i,j)\in E_s$ denote the amount of the flow sent in the graph $G=\langle V,E_s\rangle$ from the vertex $s\in V$ to the vertex $t\in V$ through an edge $(i,j)\in E_s$, $s<t$. Assuming the unit flow has to be sent from the vertex $s\in V$ to the vertex $t\in V$ in a tree $T$, we have 
$$\sum\limits_{(i,j)\in E_s} u^{st}_{ij}\geq d_{st}(T),$$ 
and a flow always exists from $s$ to $t$  that reduces this inequality to the equality (this flow goes through the shortest path). So, the constrained OCSTP can be written as the following multicommodity flow planning problem:  

\begin{problem}[F0L]
\begin{subequations}
\begin{align}
&\min\quad C_A(T)=
\sum\limits_{s,t=1: s\neq t}^n \mu_{st} \sum\limits_{(i,j)\in E_s} u^{st}_{ij} \label{flow_obj} \\
&\quad\mbox{subject to} \nonumber\\
&\sum\limits_{j:(s,j) \in E_s}  u^{st}_{sj}-\sum\limits_{i:(i,s) \in E_s}  u^{st}_{is} \geq 1 & \forall s,t\in V,\ s<t,\label{flow_1}\\
&\sum\limits_{i:(i,t) \in E_s}  u^{st}_{it}-\sum\limits_{j:(t,j) \in E_s}  u^{st}_{tj}\geq 1 & \forall s,t\in V,\ s<t,\label{flow_2}\\
&\sum\limits_{j: (i,j) \in E_s}  \left(u^{st}_{ij}-u^{st}_{ji}\right)=0 & \forall s,t\in V, \ s<t, \ \forall i\in V\setminus\{s,t\},\label{spanner_flow_3}\\
& u^{st}_{ij}\leq   x_{ij} &\forall s,t\in V,\ s<t, \ \forall (i,j) \in E_s, \label{flow_4}\\
&\sum\limits_{j: (i,j)\in E_s} x_{ij} = d_i, &\forall i\in V\\
& x_{ij} = x_{ji} \in \{0,1\}, \ 0\leq  u^{st}_{ij}\leq 1 &\forall s,t\in V, \  s<t,\ \forall (i,j)\in E_s.
\end{align}
\end{subequations}
\end{problem}

The constraints \eqref{flow_1}-\eqref{flow_4} are the standard flow conservation conditions. The flow-based formulation requires $\Theta(|E_s|)$ binary variables,  $\Theta(|R||E_s|)$ continuous variables and $\Theta(|R||E_s|)$ linear constraints; hence, in case of the dense requirements matrix and the graph (so that $|R|=\Theta(V^2)$) and $|E_s|=\Theta(V^2)$), the formulation {\bf F0L} may become intractable even for relatively small $n$. 

The validity of {\bf F0L} has been many times established in the literature \citep{fischetti2002exact,zetina2019exact}. Here we just note that the proofs typically take use of the fact that the cost function is monotone in distances $d_{ij}(T)$. Although for tree search problems, where the sole path connects any pair of vertices in any target tree, {\bf F0L}  can be extended to non-monotone cost functions by introducing the ``no return to vertex'' conditions $\sum_{j:(i,j)\in E_s}u_{ij}^{st}\le 1, \sum_{j:(i,j)\in E_s}u_{ji}^{st}\le 1$ for all $i\in V, (s,t)\in R$, this trick does not help in general network design problems.  

\subsubsection{Distance-based formulation with integral distance variables}\label{sect_f1l}

If there is an edge $(i,j)\in E$ in a connected graph $T=\langle V, E\rangle$, the distance $d_{ij}(T)=1$. Otherwise the shortest path from the vertex $i$ to the vertex $j$ has to go through some neighbor $k$ of the vertex $i$ in $T$. Hence, the distance $d_{ij}(T)$ exceeds exactly by one the distance $d_{kj}(T)$. Moreover, for any other neighbor $k'$ of the vertex $i$ in $T$ the distance $d_{ij}(T)\le 1+d_{ik'}(T)$ (or else the shortest path to the vertex $j$ would traverse the vertex $k'$). Therefore,
\begin{equation}\label{eq_bellman}
d_{ij}(T)=\begin{cases}
 1                          & \text{if }(i,j)\in E,\\
 1+\min_{k:(i,k)\in E} d_{ik}(T)   & \text{if }(i,j)\notin E. 
 \end{cases} \hspace{10mm}\forall i,j\in V, i\neq j  
\end{equation}
Surprisingly, this system of equations determines recursively the whole distance matrix $D(T)=(d_{ij}(T))_{i,j=1}^n$ of any connected graph $T$ (see Proposition \ref{prop_f1l} below).

The two MILP formulations presented in the following sections take use of this idea. Similarly to {\bf F1Q}, the first formulation introduced in the  present section employs integral variables $d_{ij}$ to model distances $d_{ij}(T)$ in a target tree $T$. 

Following the big-$M$ reformulation approach, the non-linear equations \eqref{eq_bellman} are transformed into linear inequalities using auxiliary binary variables  
$y_{ikj}$ introduced for all $i,j=1,\dots,n;\, i\neq j \neq k; (i,k)\in E_s$, such that $y_{ikj}=1$ if the shortest path from the vertex $i$ to the vertex $j$ goes through the neighbor $k$ of the vertex $i$ in the target tree $T$, and $y_{ikj}=0$ otherwise.

\begin{problem}[F1L]
\label{MILP1_f}
\begin{subequations}
\begin{align}
&\min \quad C_A(T)=
\sum\limits_{i,j=1:i\neq j}^n \mu_{ij} d_{ij} \label{f1l_obj} \\
&\mbox{subject to} \nonumber\\
& x_{ij}= x_{ji}&\hspace{-10mm} \forall (i,j)\in E_s, i< j\label{dij_1}\\
& d_{ij} \geq d_{kj}+1-M(1-y_{ikj})&\hspace{-10mm} \forall i,j\in V, i < j, (i,k)\in E_s, i< j\label{f1l_dij_2}\\
&\sum\limits_{k: (i,k)\in E_s} y_{ikj}=1-x_{ij},  &\forall(i,j)\in E_s,  i< j \label{f1l_y1a} \\
&\sum\limits_{k: (i,k)\in E_s} y_{ikj}= 1, & \forall (i,j)\notin E_s, i< j \label{f1l_y1b} \\
& y_{ikj}\leq x_{ik},  & \hspace{-10mm} \forall i<j\in V, (i,k)\in E_s, i\neq j \neq k, \label{f1l_y2}\\
&\sum\limits_{j: (i,j)\in E_s} x_{ij} = d_i, &\forall i\in V\\
&x_{ij} \in \{0,1\}, &\forall (i,j)\in E_s\\
&d_{ij}=d_{ji} \in \{1,...,L\}, &\forall (i,j)\in E_s, i< j\label{f1l_dij_lim}\\
&y_{ikj} \in \{0,1\}, & \hspace{-10mm} \forall i<j\in V,\ (i,k)\in E_s, i\neq j \neq k.
\end{align}
\end{subequations}
\end{problem}

In the formulation above, $M$ is a constant large enough to make the corresponding constraint inactive when needed, which should be at least the maximum possible diameter $L$ of the target tree. Note that although the intuition behind this formulation is quite simple, its validity is not obvious. Indeed, one may easily verify that for any tree $T$ with the vertex degree sequence $d$ there exists a feasible solution {\bf x, y, d}, satisfying constraints of formulation {\bf F1L} such that the values of variables {\bf x, y, d} correspond to the edge set, the set of neighbors laying on shortest paths, and distances between vertices in that tree $T$, respectively. However, it is not obvious why the optimal solution {\bf x$^*$, y$^*$, d$^*$} should give a connected graph and, moreover, the optimum communication spanning tree. Below we prove that in any optimal solution  {\bf x$^*$, y$^*$, d$^*$} of {\bf F1L} the values of {\bf x$^*$} define a connected tree and {\bf d$^*$} give exactly its pairwise distances.

\begin{proposition}\label{prop_f1l}
Let {\bf x$^*$, y$^*$, d$^*$} be an optimal solution of problem {\bf F1L}. Then the values of ${\bf x^*}$ define an optimal tree and the objective value gives its communication cost.
\end{proposition}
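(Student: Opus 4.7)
The plan is to mirror the two-step argument used in Proposition \ref{prop_validity_F2Q}. First I would show that \textbf{F1L} is feasible by exhibiting, for any admissible tree $T$, a natural assignment of variables: set $x_{ij}=1$ iff $(i,j)\in E(T)$, $d_{ij}=d_{ij}(T)$, and for every pair $i<j$ with $(i,j)\notin E(T)$ set $y_{ikj}=1$ for the unique neighbor $k$ of $i$ in $T$ lying on the shortest $i$-to-$j$ path in $T$ (all remaining $y_{ikj}$ set to zero). Direct inspection of (\ref{dij_1})--(\ref{f1l_y2}) confirms feasibility, and the objective value equals $C_A(T)$. Hence the optimum of \textbf{F1L} is at most $\min_T C_A(T)$ over admissible trees, which already furnishes one direction of the desired equality.

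The heart of the proof is the converse. Given an optimal solution {\bf x$^*$, y$^*$, d$^*$}, let $T^*$ be the subgraph of $G$ whose edges are $\{(i,j)\in E_s:x^*_{ij}=1\}$. Because the degree sequence is arborescent, the degree-sum constraint forces $|E(T^*)|=n-1$, so it suffices to prove that $T^*$ is connected. The central lemma I would establish is: for every pair $i\neq j$ and every $\ell\in\{1,\ldots,L\}$, $d^*_{ij}\leq\ell$ implies that there is a path of length at most $\ell$ between $i$ and $j$ in $T^*$. I would prove this by induction on $\ell$. For $\ell=1$, if $x^*_{ij}=0$, then (\ref{f1l_y1a})--(\ref{f1l_y1b}) force $y^*_{ik^*j}=1$ for some $k^*$, whence (\ref{f1l_dij_2}) with $M\geq L$ yields $d^*_{ij}\geq d^*_{k^*j}+1\geq 2$, contradicting $d^*_{ij}=1$; so $x^*_{ij}=1$ and a direct edge exists. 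For the inductive step either $x^*_{ij}=1$ gives a 1-edge path, or some $y^*_{ik^*j}=1$, in which case $x^*_{ik^*}\geq y^*_{ik^*j}=1$ by (\ref{f1l_y2}) and $d^*_{k^*j}\leq\ell-1$; the induction hypothesis yields a $k^*$-to-$j$ path in $T^*$ of length at most $\ell-1$, which the edge $(i,k^*)$ extends to an $i$-to-$j$ path of length at most $\ell$ in $T^*$.

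Applying this lemma to the bound $d^*_{ij}\leq L$ guaranteed by (\ref{f1l_dij_lim}) shows that $T^*$ is connected, hence a spanning tree with the prescribed degree sequence. The same lemma also yields $d^*_{ij}\geq d_{ij}(T^*)$ for every pair, and therefore $\sum_{i\neq j}\mu_{ij}d^*_{ij}\geq C_A(T^*)\geq\min_T C_A(T)$. Combining this inequality with the upper bound obtained in the first step forces equality throughout, so $T^*$ attains $\min_T C_A(T)$ and the optimum of \textbf{F1L} coincides with $C_A(T^*)$.

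The main obstacle is the inductive lemma, whose subtlety lies in the correct choice of the big-$M$ constant (namely $M\geq L$) so that the implication ``$y^*_{ik^*j}=1\Rightarrow d^*_{ij}\geq d^*_{k^*j}+1$'' is enforced while the constraint remains vacuous when $y^*_{ik^*j}=0$. Once this calibration is in place, the rest of the argument is a mechanical bookkeeping of edges along inductively produced paths.
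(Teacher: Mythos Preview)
Your proposal is correct and rests on the same engine as the paper's proof: both exploit that constraints \eqref{f1l_y1a}--\eqref{f1l_y2} together with \eqref{f1l_dij_2} force, for every ordered pair with $x^*_{ij}=0$, a neighbor $k^*$ of $i$ in $T^*$ satisfying $d^*_{k^*j}\le d^*_{ij}-1$, and then descend on the value of $d^*$. The paper organizes this as two separate minimal-counterexample arguments (first to obtain connectedness of $T^*$, then---once distances $d'_{ij}=d_{ij}(T^*)$ are well defined---to obtain $d^*_{ij}\ge d'_{ij}$), whereas your single inductive lemma ``$d^*_{ij}\le\ell\Rightarrow$ a path of length $\le\ell$ exists in $T^*$'' delivers both conclusions simultaneously; this is a mild streamlining rather than a genuinely different route. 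One small point to make explicit when you write it out: the constraints \eqref{f1l_dij_2}--\eqref{f1l_y2} are stated only for $i<j$, so in the inductive step you should note that the hypothesis is applied to the unordered pair $\{k^*,j\}$ via the symmetry $d^*_{k^*j}=d^*_{jk^*}$.
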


\begin{proof}
Note that since for the optimal tree $T$ there exists the feasible solution {\bf x, y, d}, satisfying constraints of formulation {\bf F1L} such that the values of variables {\bf x, y, d} correspond to the edge set, the set of neighbors laying on shortest paths, and distances between vertices in that tree $T$, respectively, and the objective value being equal to the optimal spanning tree communication cost, then the optimal tree belongs to the feasible set of {\bf F1L}, and the optimal solution of {\bf F1L} has finite values of all variables.

First, we show that the values of ${\bf x^*}$ define a connected graph. Assume that the variables {\bf x$^*$} in the optimal solution of {\bf F1L} define a disconnected graph (i.e., a graph containing edges $(i,j)\in E_s$ for which $x^*_{ij}=1$). Hence, it has at least two non-empty connected components $C_1\subset V$ and $C_2\subset V$. Since all variables take finite values in this solution, let $d^*_{ij}=\argmin\{d^*_{ab}: a\in C_1, b\in C_2\}$  be the minimum value of $d^*_{ab}$ between vertices $a\in C_1$ and $b\in C_2$ in two connected components. Assume that $i<j$ (the case when $i>j$ is considered similarly). Then, due to constraints \eqref{f1l_dij_2} - \eqref{f1l_y2}, there exists exactly one neighbor $k$ of $i$ (hence, $k\in C_1$), such that $d^*_{ij}\geq d^*_{kj}+1$, or, $d^*_{kj}\leq d^*_{ij}-1$, which contradicts the assumption. 

Hence, ${\bf x^*}$ defines a tree $T^*$. Let ${\bf x^*, y', d'}$ be a feasible solution, which satisfies the constraints of formulation {\bf F1L}, such that the values of variables ${\bf x^*, y', d'}$ correspond to the edge set, the set of neighbors laying on the shortest paths, and the distances between vertices in that tree $T$, respectively. We will show that $d^*_{ij}\geq d'_{ij}$ for any $i<j\in V$.

Assume that there exists a pair of non-adjacent vertices $a,b\in V$, such that $d^*_{ij}< d'_{ij}$ (which is not possible for adjacent vertices by definition of variables {\bf d}). Define $d^*_{ij}:=\argmin\{d^*_{ab}: d^*_{ij}< d'_{ij}\}$, so that $i,j$ is a pair of vertices with the minimum possible value of $d^*_{ij}$ such that that the distance between $i$ and $j$ in a tree $T^*$ is greater than $d^*_{ij}$. Then, due to the constraints \eqref{f1l_dij_2} - \eqref{f1l_y2}, there exists exactly one neighbor $k$ of $i$, such that $d^*_{ij}\geq d^*_{kj}+1$. Hence, $d^*_{kj}\leq d^*_{ij}-1$, and, therefore $d^*_{kj}\geq d'_{kj}$ (by the choice of $i,j$). Moreover, $d'_{kj}\leq d'_{ij}+1$. Then, we can conclude that
$$
d^*_{ij}\geq d^*_{kj}+1 \geq d'_{kj}+1\geq d'_{ij}
$$
which contradicts the assumption on the selection of $d^*_{ij}$. Hence, $d^*_{ij}\geq d'_{ij}$ for any $i<j\in V$ and since ${\bf d^*}$ is an optimal solution of {\bf F1L} and its objective value function is non-decreasing function of ${\bf d}$, then ${\bf x^*}$ defines the optimal communication tree and the objective value corresponds to its optimal communication cost.
\end{proof}

This formulation involves $\Theta(|V|^2)$ integral variables, $\Theta(|V||E_s|)$ binary variables, and $\Theta(|V||E_s|)$ linear constraints. Moreover, integral variables $d_{ij}$ can be relaxed. However, this relaxation gives no considerable performance improvement in our computational experiments, therefore, we keep them integral.

As a final remark, we note that this formulation easily generalizes to the case of the general OCSTP, where graph edges have different lengths $t_{ik}$, $(i,k)\in E_s$. In fact, it is enough to adjust the inequalities \eqref{f1l_dij_2} by replacing the constant $1$ with the edge length $t_{ik}$  to obtain 
\begin{equation*}
d_{ij} \geq d_{kj}+t_{ik}-M(1-y_{ikj}) \hspace{10pt} \forall i,j\in V, i < j, (i,k)\in E_s, i< j. \tag{\ref{f1l_dij_2}$'$}    
\end{equation*}
and by relaxing the integrality constraints \eqref{f1l_dij_lim}.


\subsubsection{Distance-based formulation with binary distance variables}\label{sect_f2l}

The next formulation is inspired by by the MILP model from  \cite{mukherjee2017minimum,diaz2019robust}. Being a linearization of {\bf F2Q}, it has a very similar set of recursive constraints. However, some constraints of the original model from \cite{mukherjee2017minimum,diaz2019robust}, when applied to OCSTP, become redundant due to the monotonicity of the cost function in distance variables (see Section~\ref{sect_quad_binary} for details).


For more details on modeling graph distances with binary variables (which usually can be relaxed) and linear constraints in related problems and corresponding MIP formulations we refer a reader to \cite{veremyev2015critical,diaz2019robust,matsypura2019exact,veremyev2019finding}.   

Since in {\bf F2Q} each term $x_{ik}w^{(\ell)}_{kj}$  for all $i,j=1,\dots,n;\, i<j \neq k; \ell=2,\dots,L, (i,k)\in E_s $ needs to be linearized, we intruduce extra variables $y^{(\ell)}_{ikj}$ for all $i,j=1,\dots,n;\, i< j \neq k; \ell=2,\dots,L, (i,k)\in E_s $, such that    $y^{(\ell)}_{ikj}=x_{ik}w^{(\ell)}_{ij}$. Then, we can write the following linearized version of {\bf F2Q}:

\begin{problem}[F2L]
\label{MILP2_f}
\begin{subequations}
\begin{align}
&\min \quad C_A(T)=
\sum\limits_{i,j=1:i\neq j}^n \mu_{ij}\left(L-\sum\limits_{\ell=1}^{L-1} w^{(\ell)}_{ij}\right) \label{f2l_obj} \\
&\mbox{subject to} \nonumber\\
& w^{(1)}_{ij}= x_{ij}, &\hspace{-40mm} \forall(i,j)\in E_s\label{f2l_w1_1}\\
& w^{(1)}_{ij}= 0, &\hspace{-40mm} \forall(i,j) \notin E_s, i< j\\
& w^{(\ell)}_{ij}\leq x_{ij}+\sum\limits_{k\neq j:(i,k)\in E_s}y^{(\ell)}_{ikj}, &\hspace{-40mm}\forall (i,j)\in E_s, \ i< j, \ell\in\{2,\dots,L\}\\
& w^{(\ell)}_{ij}\leq \sum\limits_{k\neq j:(i,k)\in E_s}y^{(\ell)}_{ikj}, &\hspace{-40mm}\forall (i,j)\notin E_s, i< j,\ \ell\in\{2,\dots,L\}\\
& w^{(L)}_{ij}=1, &\hspace{-40mm}\forall i,j\in V, i< j\\
&\sum\limits_{j: (i,j)\in E_s} x_{ij} = d_i, &\forall i\in V\\
& y^{(\ell)}_{ikj}\leq x_{ik},\ y^{(\ell)}_{ikj}\leq w^{(\ell-1)}_{kj},  & \hspace{-50mm} \forall i<j\in V, (i,k)\in E_s, j \neq k, \ell\in\{2,\dots,L\}\label{f2l_y_l}\\
&x_{ij}= x_{ji}\in \{0,1\}, &\forall (i,j)\in E_s, i<j\\
&w^{(\ell)}_{ij}=w^{(\ell)}_{ji}, & \hspace{-50mm} \forall i,j\in V: i< j, \ell\in\{1,\dots,L\}\\
&y^{(\ell)}_{ikj} \in \{0,1\}, & \hspace{-50mm} \forall (i,k)\in E_s, j\in V: i< j \neq k, \ell\in\{1,\dots,L\}.
\end{align}
\end{subequations}
\end{problem}
 
This formulation contains  $\Theta(L|V||E_s|)$ binary variables and $\Theta(L|V||E_s|)$ linear constraints, which is larger than the linear formulation {\bf F1L}, but contains only binary variables. The validity of this formulation is easily verified using the similar arguments as in the proof of Proposition \ref{prop_validity_F2Q}. Also, it should be noted that the variables {$\bf w$, $\bf y$} can be relaxed as well.

Being a linearization of {\bf F2Q}, formulation {\bf F2L} also hardly generalizes beyond ORSTP, since the distance is measured as an edge count in a shortest path. At the same time, an original model with binary distance variables has been generalized to integral edge weights in a quasi-polynomial manner in \cite{veremyev2015critical}.




\section{Computational Experiments}\label{sect_comp}

\subsection{Environment}

For our computational experiments we generated a series of requirements matrices of different size. They are random principal submatrices of eight distinct origin-destination matrices  (OD-matrices) from transportation industry introduced in \cite{goubko2020lower} (see Table \ref{tb_datasets}). More details on these data sets can be found in \cite{goubko2020lower}. The pattern of requirements in every matrix was generated to form a connected structure of traffic flows.

\begin{table}[ht]
\scriptsize
\setlength{\tabcolsep}{3pt}
\renewcommand{\arraystretch}{1.2}
\caption{Data sets used to generate requirements matrices for computational experiments}
\begin{center}
\begin{tabular}{c|c|c|l}
\hline
No & Abbreviation & Original size & Description \\
\hline
1 &	BCC & 20 & Queensland TransLink ``BCC Ferry'' ferry service passenger flows \\
2 &	Sunbus & 846 & Queensland TransLink ``Sunbus'' local bus trips \\
3 &	PRT & 498 & Queensland TransLink ``Park Ridge Transit'' express bus trips \\
4 &	MGBS & 364 & Queensland TransLink ``Mt Gravatt Bus Service'' carrier trips \\
5 &	QR & 154 & Queensland TransLink ``Queensland Rail'' urban train passenger flows \\
6 &	ANPR & 91 & Greater Cambridge vehicle journeys (ANPR Data) \\
7 &	US & 269 & The Air Carrier flight statistics for ``US Airways Inc.''\\
8 &	TW & 176 & The Air Carrier flight statistics for ``Trans World Airways LLC''\\
\hline
\end{tabular}
\end{center}
\label{tb_datasets}
\end{table}%

The eight source matrices represent diverse internal structure of traffic flows. They differ in the density and in the pattern of flow concentration (e.g., presence of popular destinations or busy routes). Several examples of generated OD-submatrices over 15 vertices are shown in Figure~\ref{fig_sample_flow}. The tone of the cell $(i,j)$ (from white to black) corresponds to the more intensive traffic flow (requirement) from the source $i$ to the destination $j$.

\begin{figure}[th]
\centering
\subfloat[Submatrix of {\bf BCC} OD-matrix: density=0.88]{\label{2_15_1}\includegraphics[scale=0.33]{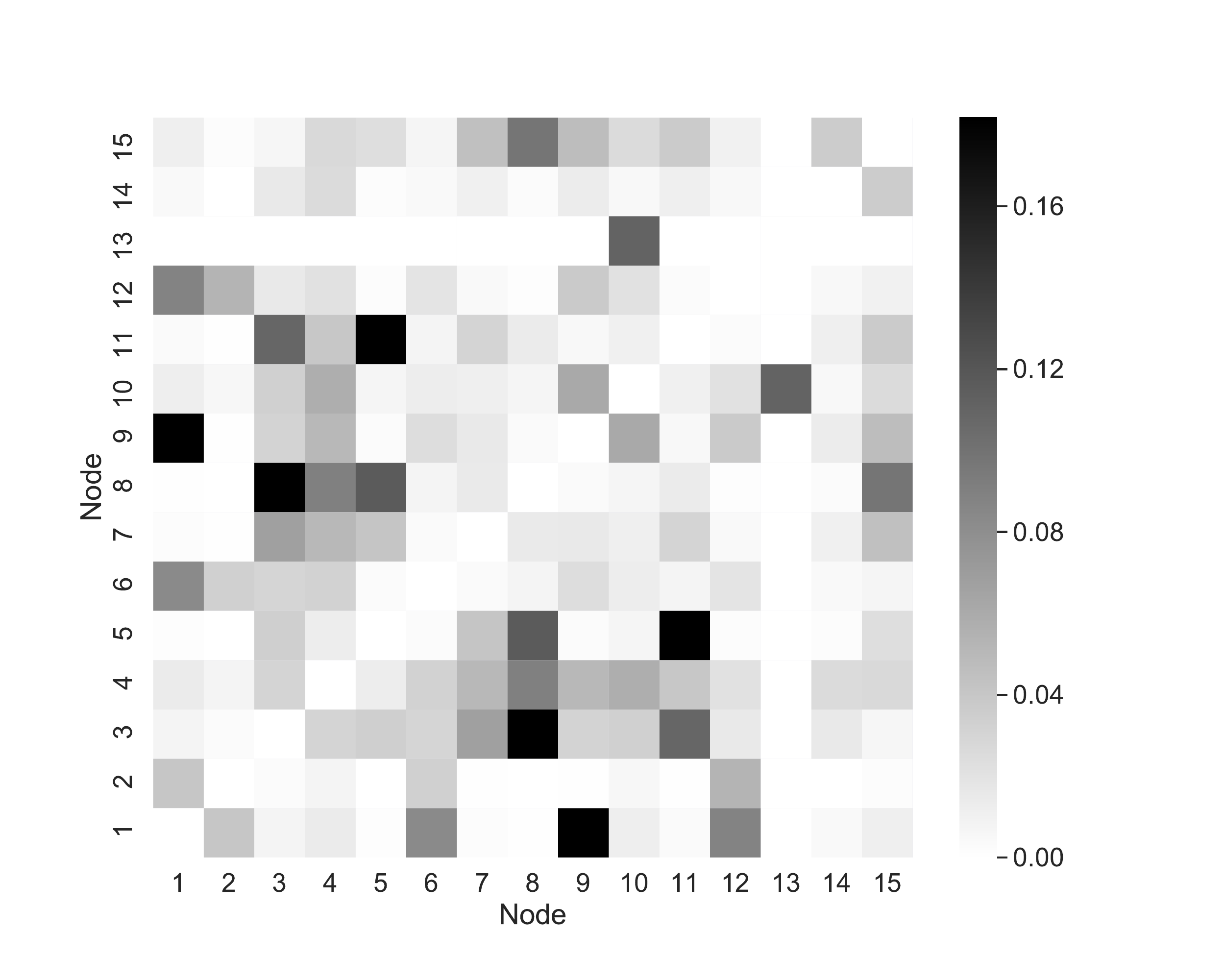}} 
\subfloat[Submatrix of {\bf Sunbus} OD-matrix: density=0.65]{\label{3_15_1}\includegraphics[scale=0.33]{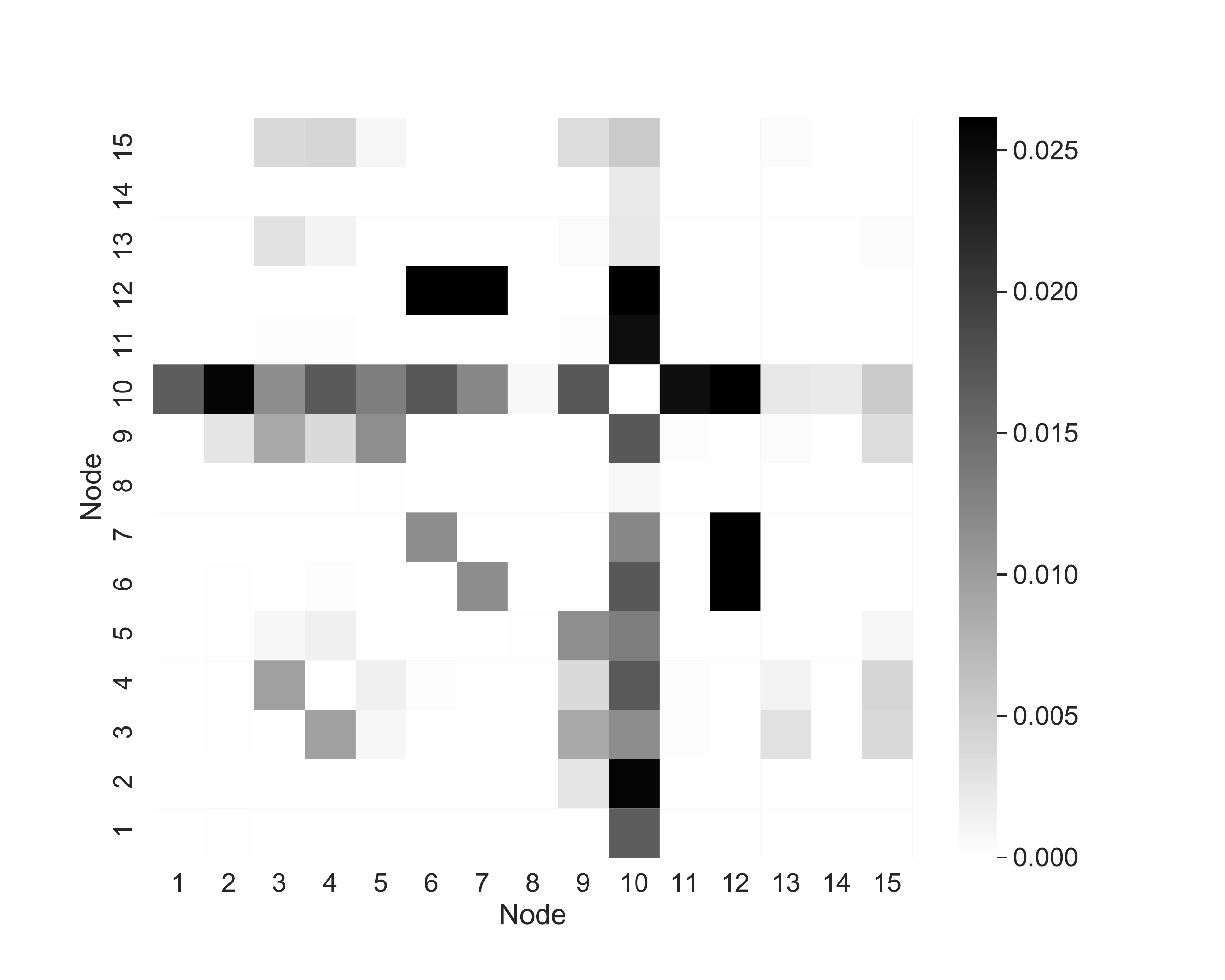}}\\
\subfloat[Submatrix of {\bf QR} OD-matrix: density=0.16]{\label{6_15_1}\includegraphics[scale=0.33]{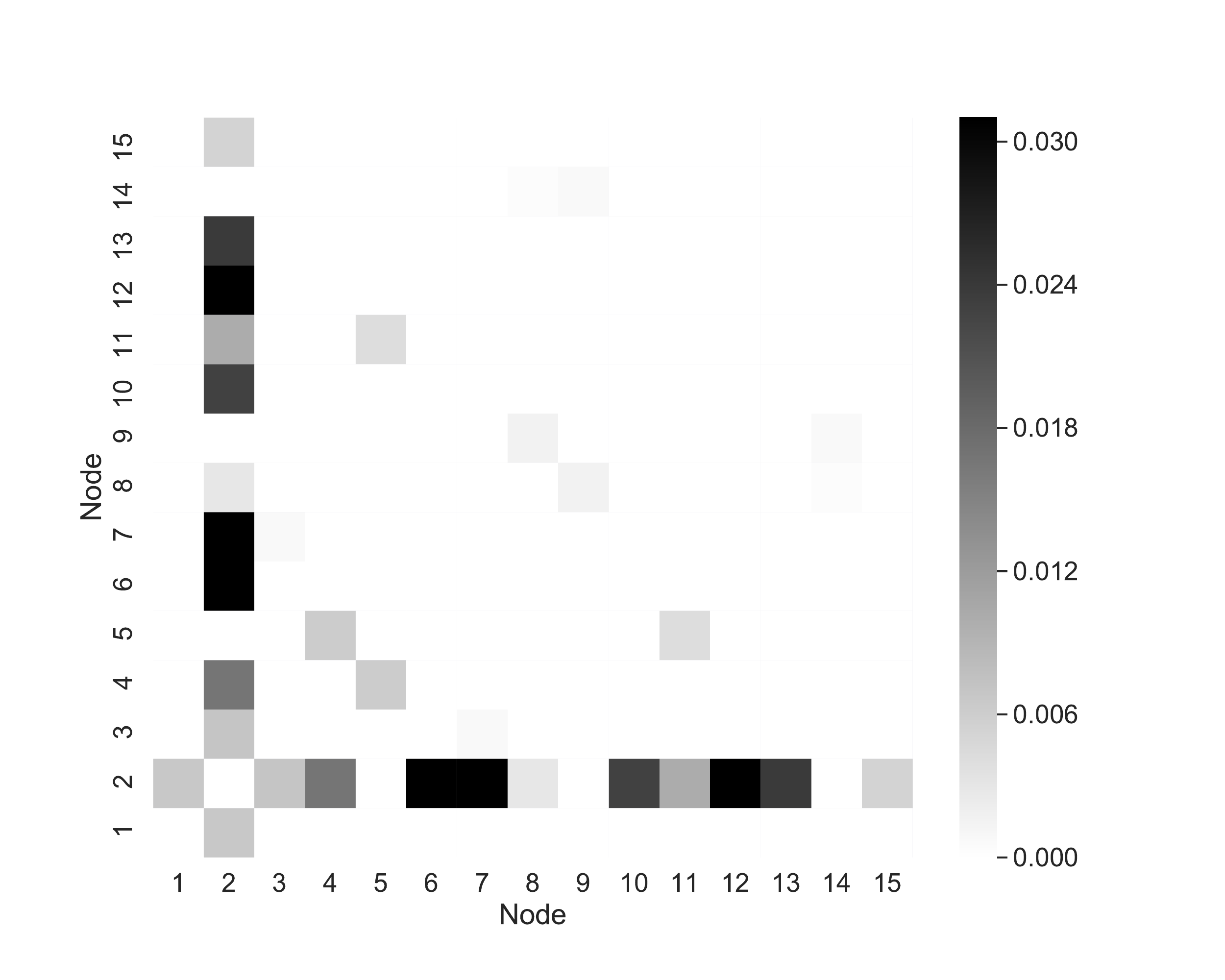}}
\caption{Example of three 15x15 requirements matrices used in computational experiments.}
\label{fig_sample_flow}
\end{figure}

For every problem dimension (order of a spanning tree) $n$, ten random degree sequences were generated with internal vertex degrees uniformly distributed from 2 to 5, and for a given problem dimension $n$ the constrained OCSTP was solved for every requirements matrix of order $n$ against all ten degree sequences of order $n$. Diversity of vertex degree sequences is important because a degree sequence may dramatically affect the volume of the search space and, as a consequence, the computation time of branch-and-bound algorithms.  

In our experiments we compared three MIQP formulations and three MILP formulations introduced in the previous section (see the list and a shorthand notation in Table \ref{tb_formulations}). Five of these six formulations are novel, with the only exception of \textbf{F0L}.

\begin{table}[ht]
\scriptsize
\setlength{\tabcolsep}{3pt}
\renewcommand{\arraystretch}{1.2}
\caption{MIP formulations compared in computational experiments}
\begin{center}
\begin{tabular}{  m{1cm} | m{9cm}| m{1cm} | m{4cm}}
\hline
Abb. & Name & Section & Brief comment \\
\hline
\multicolumn{4}{c}{Mixed-integer quadratic formulations}\\  
\hline
\bf{F0Q} & Enumeration of defoliated-trees and QAP & \ref{sect_brute} & Used as a baseline\\
\bf{F1Q} & Formulation based on bilinear matrix equation for trees & \ref{sect_LD} & Novel compact formulation\\
\bf{F2Q} &  Bilinear distance-based formulation with binary distance variables &  \ref{sect_quad_binary} & Based on ideas from \cite{veremyev2015critical} \\
\hline
\multicolumn{4}{c}{Mixed-integer linear formulations}\\  
\hline
\bf{F0L} & Flow-based formulation & \ref{sect_flow} & Multicommodity flow model \\
\bf{F1L} & Distance-based formulation with integral distance variables & \ref{sect_f1l} & Novel formulation\\
\bf{F2L} & Distance-based formulation with binary distance variables & \ref{sect_f2l} & Linearization of \bf{F2Q} \\
\hline
\end{tabular}
\end{center}
\label{tb_formulations}
\end{table}%

Some variations of basic formulations were considered: 
\begin{enumerate}
    \item The refined formulation  \textbf{F1Q$^\urcorner$} with the reduced number of equality constraints was compared to the full \textbf{F1Q} formulation. 
    \item For \textbf{F1L} we studied the influence of the big-$M$ constant in \eqref{f1l_dij_2}, trying the values $L$ and $n-1$. When applicable, two versions of \textbf{F1L} formulation are denoted as \textbf{F1L}$(L)$ and \textbf{F1L}$(n-1)$ respectively. 
\end{enumerate}

We used Gurobi\textsuperscript{TM} V.\,9.0.0 invoked from Python 3 Anaconda environment under Windows Server 2008 R2 running on Intel(R) Xeon(R) X5670 (2.93 GHz, 6 cores, 12 threads) with 12 Gb RAM. Standard Gurobi parameters were used, including the absolute MIP tolerance $10^{-4}$. 

\subsection{Initialization of the branch-and-bound algorithm}\label{sect_heur}

As many NP-complete problems, OCSTP is solved either exactly with limited enumeration or approximately using heuristic algorithms. This article focuses on the exact methods, and heuristics are neither surveyed nor studied in detail. At the same time, since initialization of the branch-and-bound algorithm with some heuristic solution (a record) considerably affects its performance, in our computational experiments we compare two initialization strategies. 

The first strategy employs general-purpose MIP heuristics implemented in contemporary combinatorial optimization packages (in our experiments we used Gurobi 9.0). In the second strategy we seek to improve this default initialization using the local search algorithm from \cite{goubko2020lower}. However, in both strategies a part of computation time was invested into record improvement during the execution of the branch-and-bound algorithm according to standard policies of the optimization package. 
In algorithms of the local search every feasible solution is assigned a subset of feasible solutions called the ``neighborhood''.  The algorithm starts from an arbitrary feasible solution and iteratively transits to the least-cost solution from the neighborhood until no cost improvement is obtained.

Many algorithms of this class are developed for OCSTP. Our algorithm differs in two aspects. First, it is developed to design trees with the given degree sequence, so elementary graph transformations that form the neighbourhood of a given candidate tree preserve the degree sequence. Second, these elementary graph transformations are used in the proof of Huffman tree optimality for the Wiener index for vertex-weighted trees in \cite{goubko2016minimizing}. So, this algorithm converges to the optimal spanning tree at least when the requirements matrix is represented in the form $A=\mu\cdot\mu^\top$, where $\mu$ is a non-negative vector of vertex weights\footnote{For the Huffman tree to be optimal, $\mu$ should also be \textit{consistent} with $d$, i.,e., $d_i<d_j$ implies $\mu_i\le \mu_j$, $\forall i,j\in V$.}. Performance and computation time are not discussed here (see \cite{goubko2020lower} for details). 

Basing on our computational experiments, below we report the comparative quality of our local search heuristic with respect to the general-purpose MIP heuristics. We also investigate the effect of initial solution quality on the computation time for different formulations of OCSTP. 

\subsection{Comparing MIQP and MILP formulations on small-size problems}

First we compare all variants of formulations from Table \ref{tb_formulations} on a set of eight small requirement matrices of order $n=15$ obtained from all eight source matrices from Table \ref{tb_datasets}. This (rather small) order is selected so that the branch-and-bound algorithm for all formulations converges under a reasonable time limit of 15\,000~sec. The same collection of ten vertex degree sequences was tested against each requirements matrix giving the total amount of 80 distinct cases. 

\textbf{F1Q$^\urcorner$} appears to be 1.5 times faster than \textbf{F1Q} in average, while  \textbf{F1L}$(L)$ and \textbf{F1L}$(n-1)$ exhibit roughly equal performance. The comparison of the best versions of six formulations is presented on the ``box with whiskers'' plot in Figure~\ref{fig_timeALLmodels15}. The average and median computation time along with algorithms' ranking are also shown in Table \ref{tb_timeALLmodels15}.

\begin{figure}[th]
\centering
\includegraphics[scale=0.8, trim=0pt 20pt 0pt 0pt, clip]{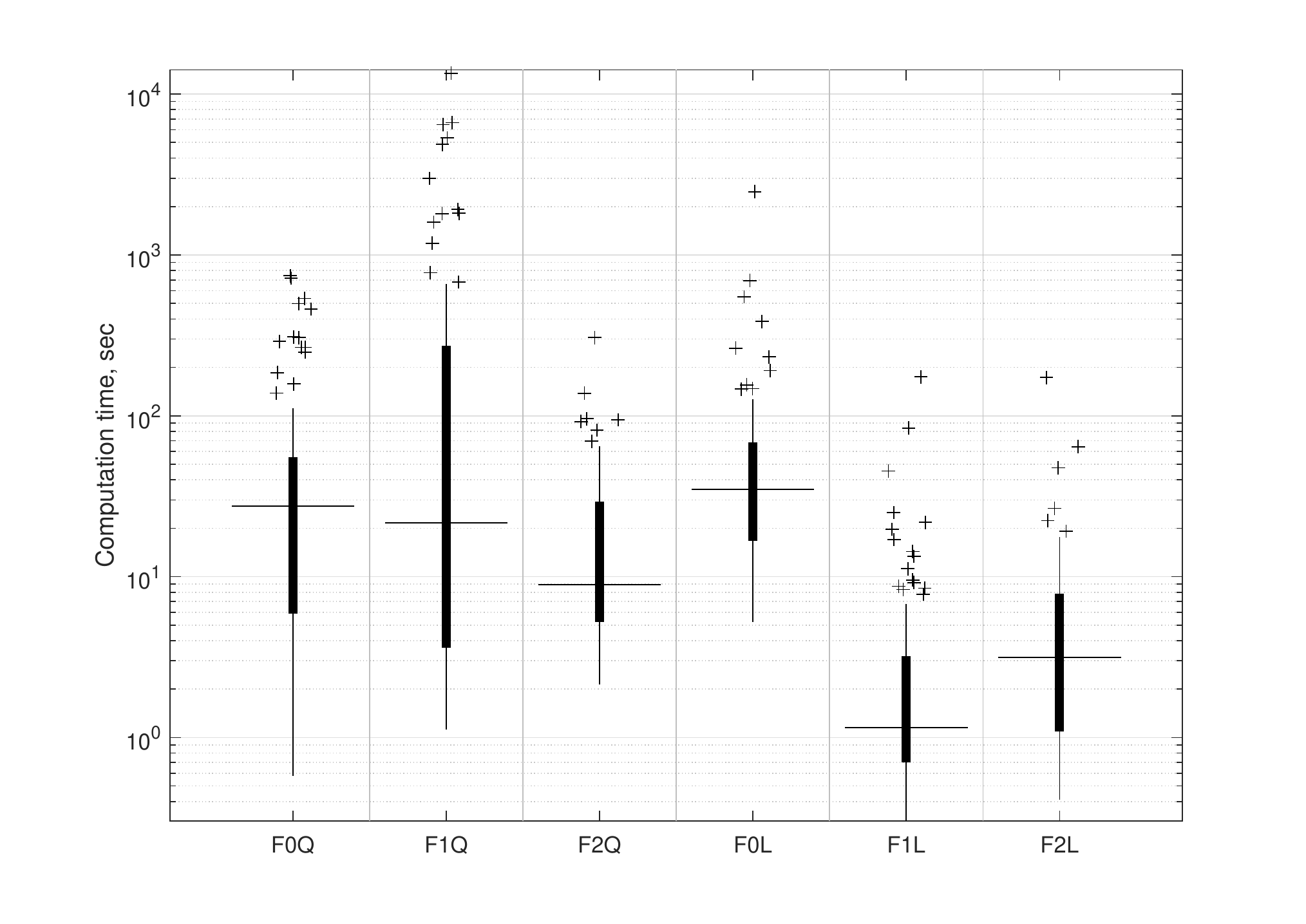} 
\caption{Distribution of computation time for MIQP and MILP formulations, $n=15$. The horizontal mark represents the median computation time, the black bar covers a quartile above and a quartile below the median, and the vertical line covers the whole range of computation time except individual outliers shown as cross marks. The logarithmic scale is used for better visibility.}
\label{fig_timeALLmodels15}
\end{figure}

\begin{table}[ht]
\scriptsize
\setlength{\tabcolsep}{3pt}
\renewcommand{\arraystretch}{1.2}
\caption{Summary of computation time for MIQP and MILP formulations, $n=15$.}
\begin{center}
\begin{tabular}{ l|c|c|c|c|c|c }
\hline
Formulation & F0Q & F1Q & F2Q  & F0L & F1L & F2L \\
\hline
Median computation time, sec. & 24.86 & 21.59 & 8.95 & 34.80 & 1.16 & 3.14 \\
\hline
Median time rank & V & IV & III & VI & I & II \\
\hline
Average computation time, sec. & 79.40 & 683.23 & 25.08 & 99.15 & 7.06 & 8.35 \\
\hline
Average time rank & IV & VI & III & V & I & II \\
\hline
\end{tabular}
\end{center}
\label{tb_timeALLmodels15}
\end{table}%

These experiments show that the novel distance-based MILP formulations \textbf{F1L} and \textbf{F2L} are far ahead of both the classical multicommodity flow formulation \textbf{F0L} and of all three MIQP formulations, at least, for $n=15$. The median example is solved with \textbf{F1L} 3 times faster than with \textbf{F2L}. However, the computation time of \textbf{F1L} is less stable, and, due to several long-running outlier examples, \textbf{F1L} and \textbf{F2L} enjoy almost equal average computation time. 

\textbf{F2Q} is just a bilinear version of \textbf{F2L}. It is several times slower than \textbf{F2L} due to less economical default linearization performed by Gurobi software. However, \textbf{F2Q} appears more efficient than two other MIQP formulations and even than \textbf{F0L}. 

Complex equality constraints in \textbf{F1Q} result in longer computation and in the less stable computation time (the solution improvement process sometimes stagnates). While \textbf{F1Q$^\urcorner$} is faster than \textbf{F1Q} for $n=15$, its superiority should be verified on larger examples. 
 
Unexpectedly, the performance of the MILP multicommodity-flow formulation \textbf{F0L} is worse than that of the MIQP tree-enumeration algorithm \textbf{F0Q}. This conclusion, however, should also be verified on on larger examples.

For small dimensions the local search heuristics has reasonably good quality, with the average gap less than 4\%. However, it has a surprisingly small effect on the computation time of algorithms. As shown in Figure~\ref{fig_heur}, the median computation time even increases a bit for \textbf{F1Q}, \textbf{F2Q} and \textbf{F1L}. For \textbf{F0L} and \textbf{F2L} the gain from using a local search heuristics is also negligible. However, for the larger problem dimensions the heuristics might be still useful (see Section~\ref{sect_comp_large}).

\begin{figure}[th]
\centering
\includegraphics[scale=0.8, trim=0pt 45pt 0pt 0pt, clip]{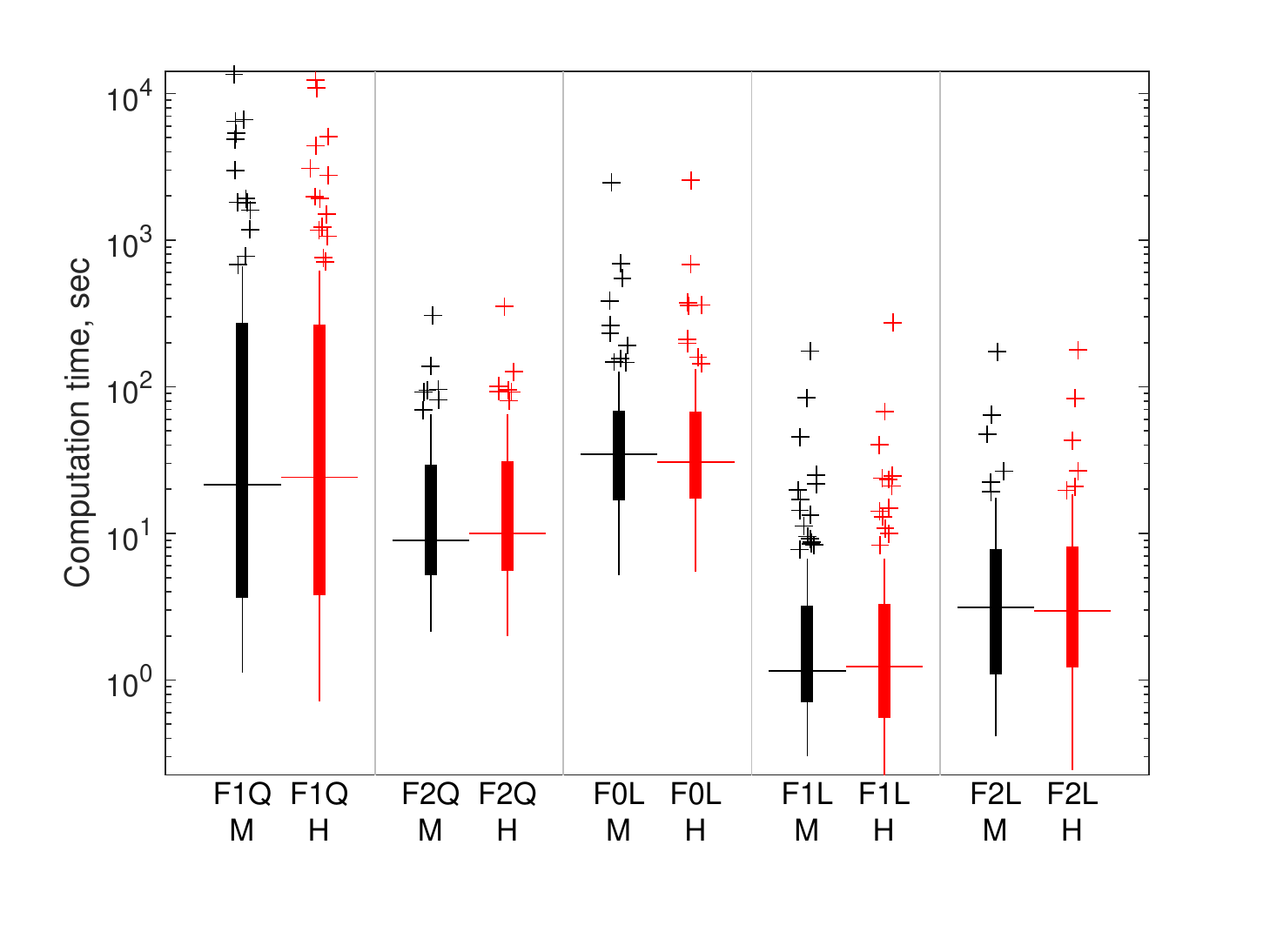} 
\caption{Comparing the computation time distribution with MIP heuristics (\textbf{black}) and with the local search heuristics (see Section~\ref{sect_heur}) provided ({\color{red}\textbf{red}}), $n=15$. The legend is the same as in Figure~\ref{fig_timeALLmodels15}.}
\label{fig_heur}
\end{figure}

For all formulations the computation time has high dispersion, not only because of the unpredictability of the branch-and-bound schemes, but merely because the volume of the search space crucially depends on the vertex degree sequence. A typical effect of the vertex degree sequence is shown in Figure~\ref{fig_degree}, where \textbf{F0Q} and \textbf{F0L} formulations are compared. The computation time is highly correlated for all algorithms (e.g., for the leading formulations \textbf{F1L} and \textbf{F2L} the correlation is 0.99), so the vertex degree sequence is the main factor of the problem complexity. 

\begin{figure}[th]
\centering
\includegraphics[scale=0.8]{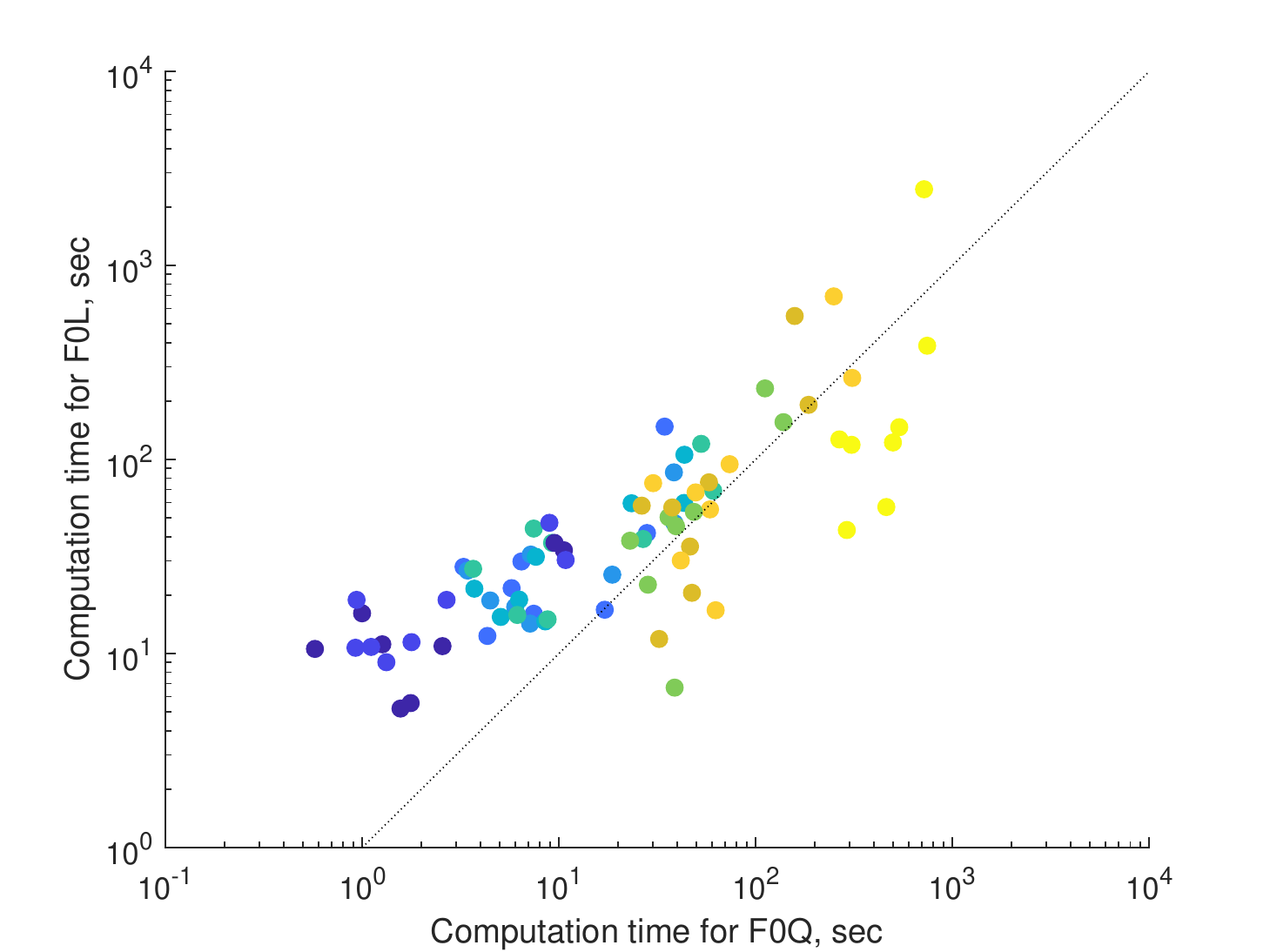}
\caption{Computation time: \textbf{F0L} vs \textbf{F0Q}, $n=15$ (double logarithmic scale). Different colors are used for vertex degree sequences from the simplest (violet) to the most complex (bright yellow). \textbf{F0L} is faster for cases depicted below the diagonal (the dotted line), while \textbf{F0Q} is faster in all other cases.}
\label{fig_degree}
\end{figure}

\subsection{Comparing MILP formulations on medium-size problems}\label{sect_comp_medium}

The next series of experiments were run for requirements matrices of bigger order $n=20$.  The ``BCC'' data set cannot provide different submatrices of size $20$, so it is discarded. With the increase of the problem dimension, formulation \textbf{F1Q} becomes too slow, so we exclude it from consideration. We also exclude \textbf{F2Q}, since it is inferior to its careful linearization \textbf{F2L}. Again, the time limit is 15\,000~sec. Ten vertex degree sequences were tested against each requirements matrix giving the total amount of 70 distinct cases. 

The results are presented in Figure \ref{fig_time15000}. It shows that the gap between the novel distance-based formulations (\textbf{F1L} and \textbf{F2L}) and the multicommodity-flow formulation \textbf{F0L} does not decrease with the problem dimension. Both for $n=15$ and $n=20$ \textbf{F0L} is approximately 35 times slower than \textbf{F1L}. \textbf{F1L} is still faster than \textbf{F2L}. However, the difference between their median computation time is smaller than that for the smaller problem dimension $n=15$. In the next subsection we compare the two distance-based MILP formulations in more detail.

\begin{figure}[th]
\centering
\includegraphics[scale=0.8, trim=0pt 10pt 0pt 0pt, clip]{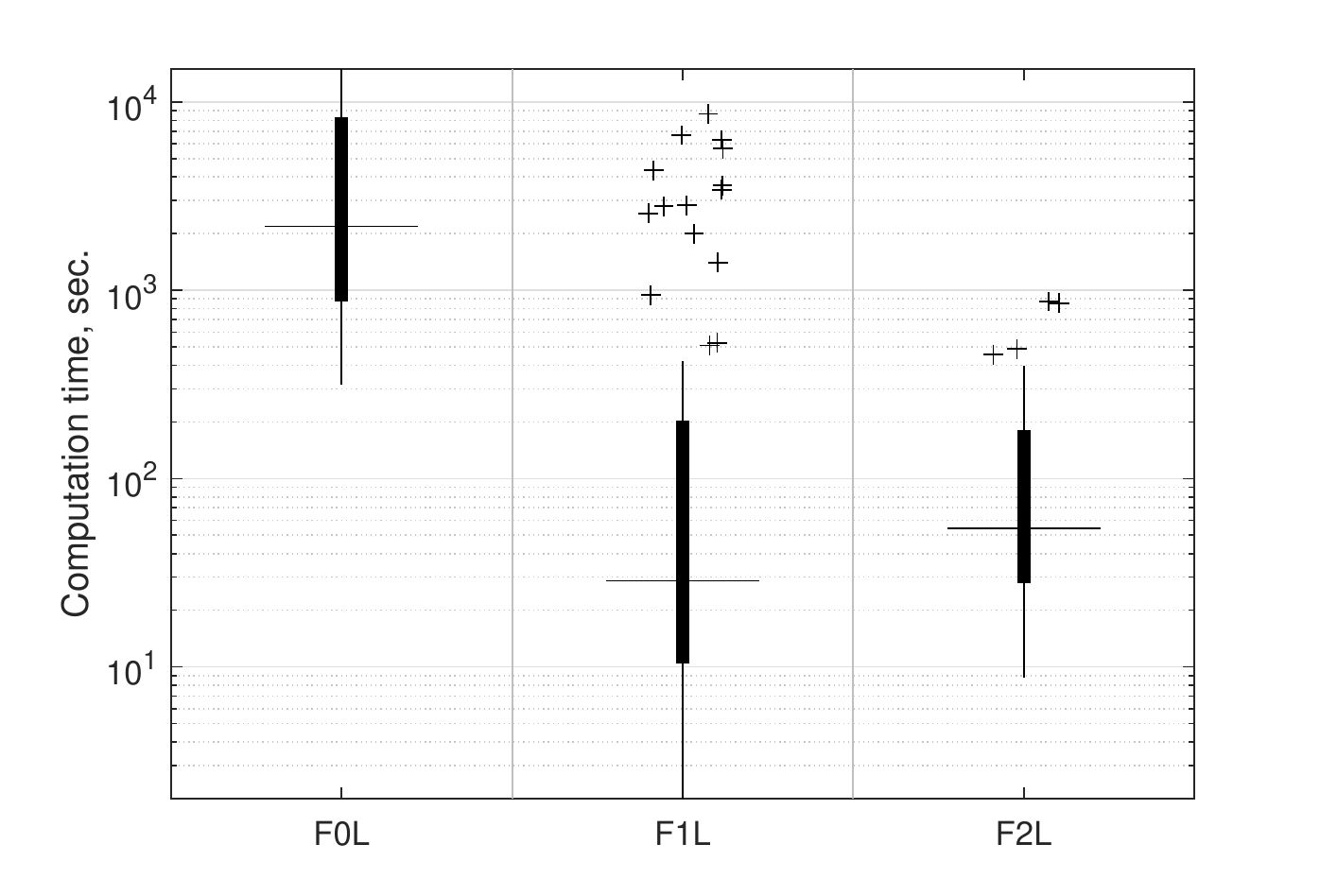} 
\caption{Distribution of computation time for MILP formulations, $n=20$. The legend is the same as in Figure~\ref{fig_timeALLmodels15}. The logarithmic scale is used for better visibility.}
\label{fig_time15000}
\end{figure}

\subsection{Comparing distance-based MILP formulations on large-size problems}\label{sect_comp_large}

The two leading formulations, \textbf{F1L} and \textbf{F2L}, are efficient enough to solve in reasonable time most instances of the constrained OCSTP with dimension up to 30. To identify the relation between the problem dimension and the computation time, \textbf{F1L}$(L)$ and \textbf{F2L} where run for all $n=15,...,28$ and all available data sets from Table~\ref{tb_datasets} with the starting point provided by the standard MIP heuristics. As before, ten distinct vertex degree sequences were generated for each problem dimension and tested against each requirements table of the appropriate size.

Again, the computation time has considerable dispersion for each problem dimension, so we present the quartile distributions in Figure~\ref{fig_time1000}. The results are a bit confusing: while \textbf{F1L} is faster for $n=15,...,20$, for $n \ge 21$ \textbf{F2L} takes lead. As more than a half of \textbf{F1L} examples where timed out for $n\ge 23$, we performed a limited testing for $n=15,20,25,30$ with the time limit increased to $80\,000$ sec. The results presented in Figure~\ref{fig_time80000} show the comparative performance of distance-based MILP formulations more clearly. Only 19\% of examples for $n=30$ where not timed out for \textbf{F1L}, while 63\% of examples converged to optimality for \textbf{F2L}. For examples of dimension $n=30$ where both \textbf{F1L} and  \textbf{F2L} converged to optimality, \textbf{F2L} is 8 times faster in average. Figure~\ref{fig_time80000} also shows that the computation time for \textbf{F2L} is more stable. 

\begin{figure}[ht!]
\centering
\includegraphics[scale=0.8, trim=0pt 40pt 0pt 0pt, clip]{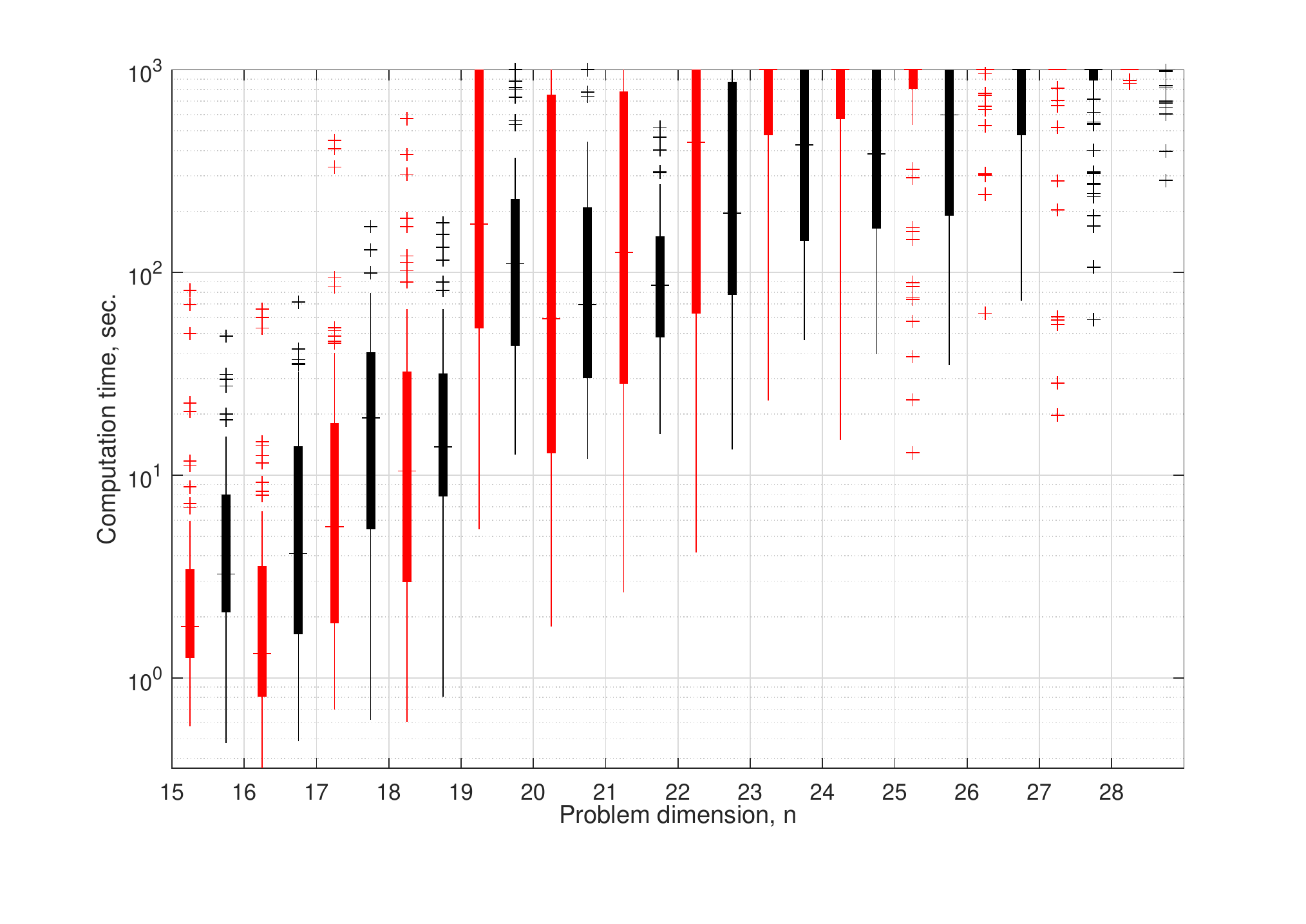} 
\caption{Distribution of computation time for \textbf{F1L} ({\color{red}\textbf{red}}) and \textbf{F2L} (\textbf{black}), $n=15...30$. The legend is the same as in Figure~\ref{fig_timeALLmodels15}. The logarithmic scale is used for better visibility.}
\label{fig_time1000}
\end{figure}

\begin{figure}[th]
\centering
\includegraphics[scale=0.8, trim=0pt 30pt 0pt 0pt, clip]{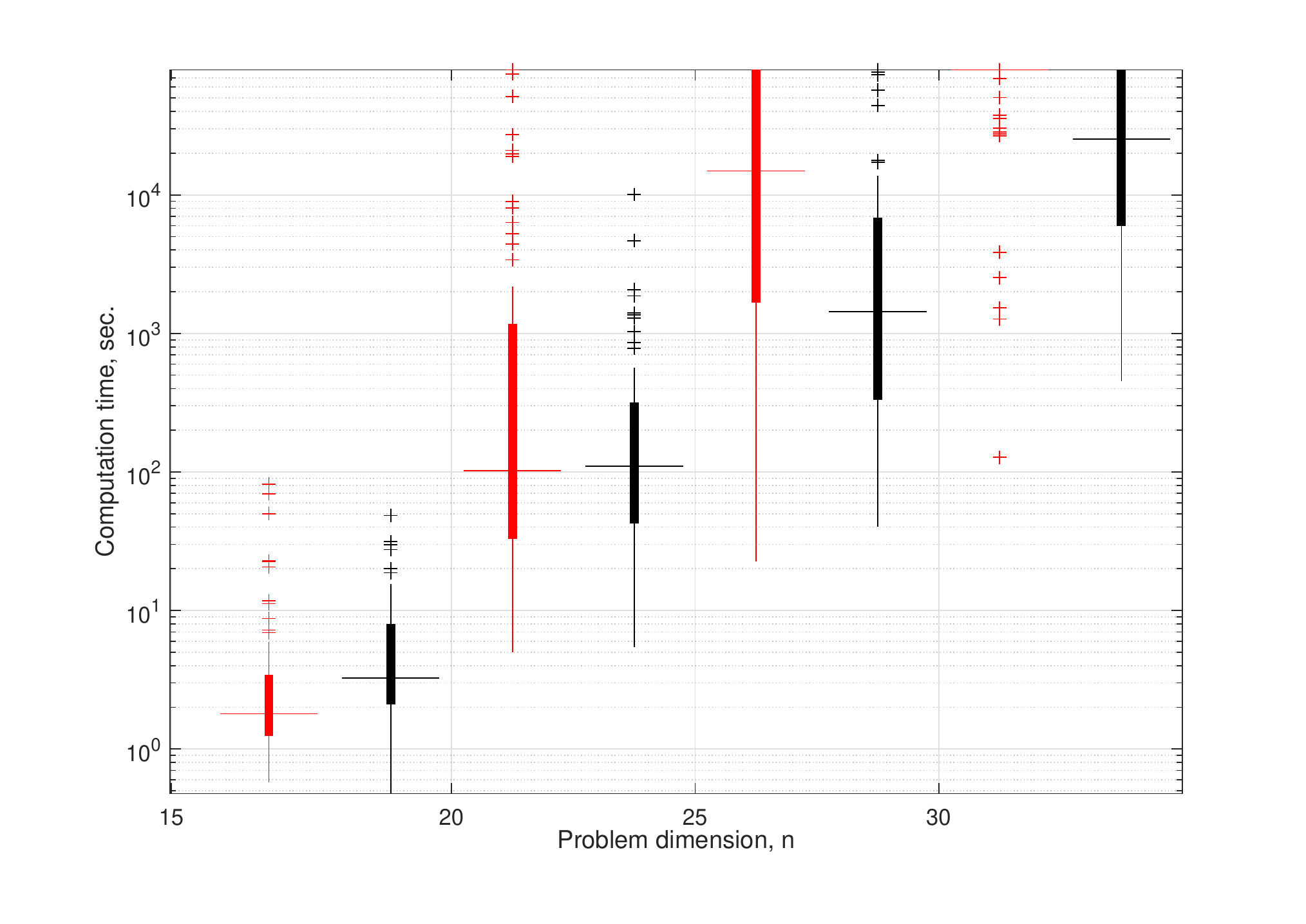} 
\caption{Distribution of computation time for \textbf{F1L} ({\color{red}\textbf{red}}) and \textbf{F2L} (\textbf{black}), $n=15, 20, 25, 30$ time limit increased to 80\,000 sec. The legend is the same as in Figure~\ref{fig_timeALLmodels15}. The logarithmic scale is used for better visibility.}
\label{fig_time80000}
\end{figure}

Therefore, although \textbf{F2L} uses $L$ times more binary variables than \textbf{F1L}, the former outperforms the latter for large-sized examples because it provides better relaxations (We compare the best lower bounds obtained under both formulations in Figure~\ref{fig_lb}.) 

\begin{figure}[ht!]
\centering
\includegraphics[scale=0.8, trim=0pt 10pt 0pt 0pt, clip]{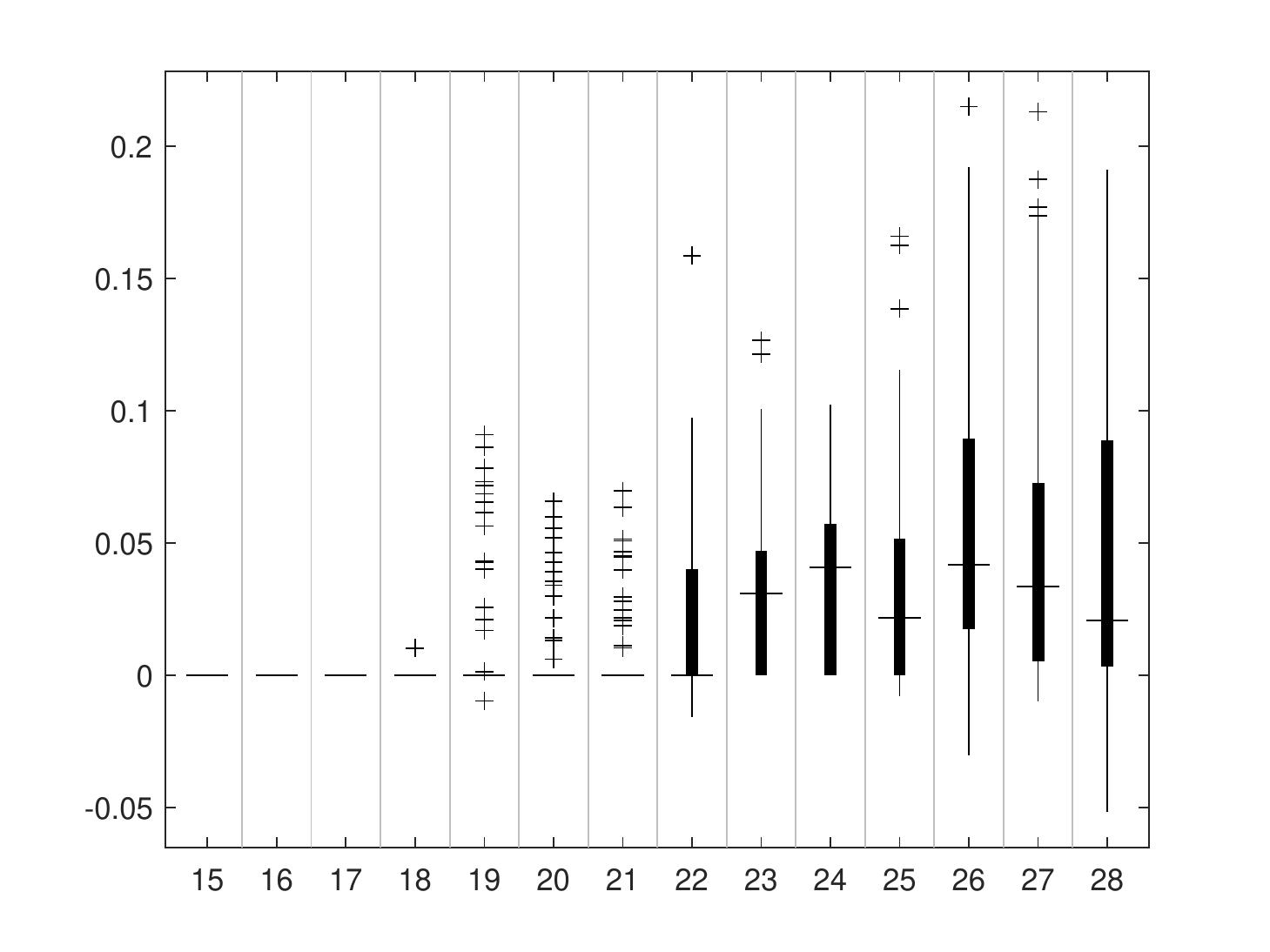} 
\caption{The ratio $\frac{LB(\mathbf{F2L})}{LB(\mathbf{F1L})}-1$ of the best lower bounds ($LB$) obtained under \textbf{F1L} and \textbf{F2L} formulations until the time limit reached. The results show that \textbf{F2L} almost always provides the better bound.}
\label{fig_lb}
\end{figure}

At the same time, \textbf{F1L} provides better records (the best found solutions) than \textbf{F2L}. From Figure~\ref{fig_rec} we see that the best solution found by \textbf{F1L} is often much better than the one found by \textbf{F2L}. To be precise, \textbf{F1L} finds better solution two times more often than \textbf{F2L}. Moreover, in 9\% of examples 1000 sec was not enough for \textbf{F2L} to outperform even the local search heuristic solution. The effect strengthens with the increase of problem dimension (for $n=28$ \textbf{F2L} outperforms the heuristic solution only in 38\% of examples). We can conclude that \textbf{F1L} appears more convenient for MIP heuristic algorithms implemented in Gurobi software. 

Therefore, if one is interested in the best solution found in limited time, the formulation \textbf{F1L} clearly wins the competition. Moreover, the effect of the better record prevails the effect of the better lower bound, and \textbf{F1L} also has the lower average \textit{gap}\footnote{The ratio $\frac{\mathrm{Record}}{LB}-1$ representing the relative length of the range that encloses the optimal solution.}, 3.8\% against 4.3\% for \textbf{F2L}. 

\begin{figure}[th]
\centering
\includegraphics[scale=0.8, trim=0pt 10pt 0pt 0pt, clip]{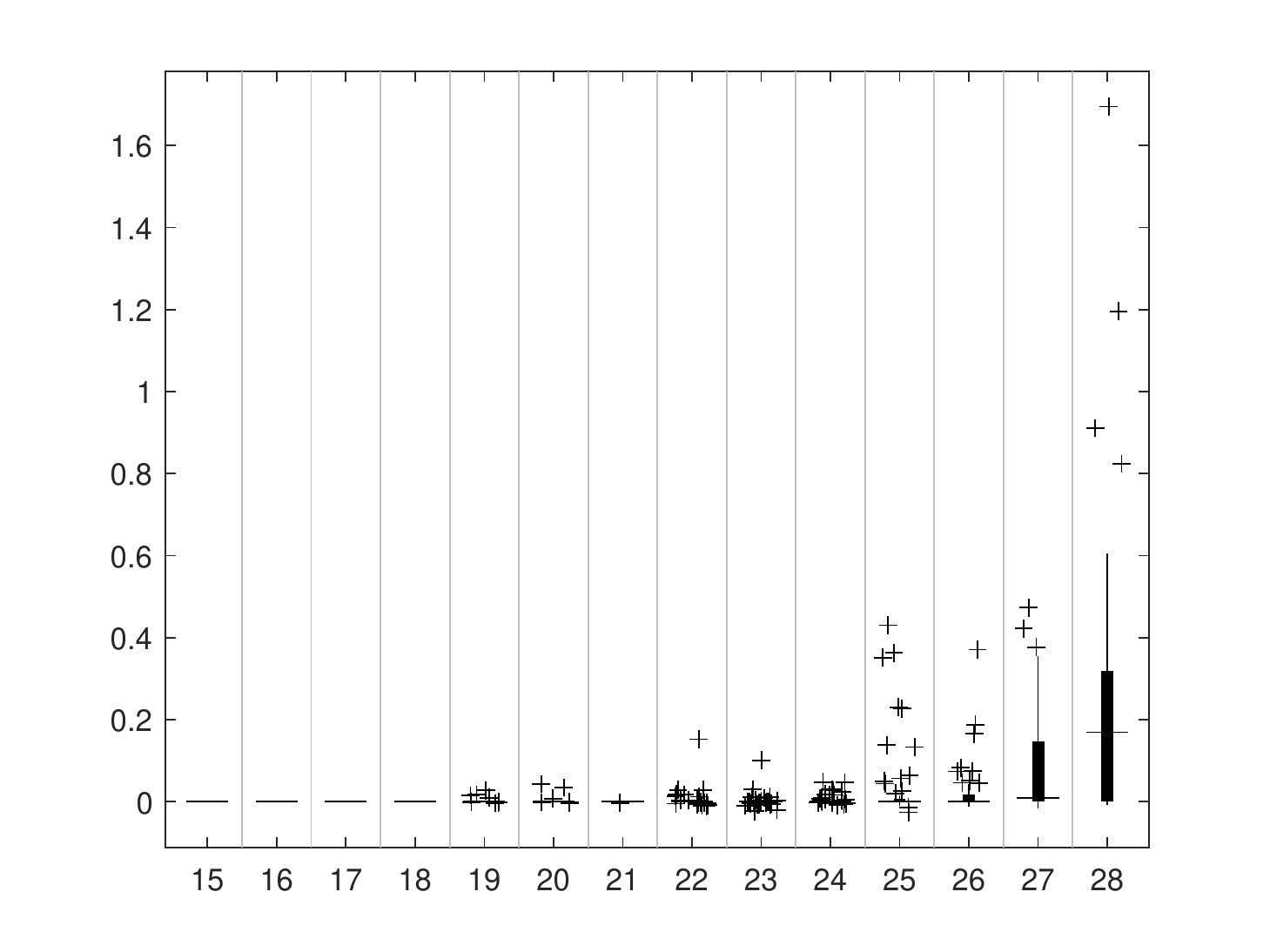} 
\caption{The ratio $\frac{\textrm{Record}(\mathbf{F2L})}{\textrm{Record}(\mathbf{F1L})}-1$ of the best solution costs (Record) found under \textbf{F1L} and \textbf{F2L} formulations until the time limit of 1000 sec is reached. The results show that \textbf{F1L} almost always finds better feasible solutions.}
\label{fig_rec}
\end{figure}

Finally, let us note that the computation time of the two formulations is highly correlated (H=0.72 for all cases converged to optimality), so, examples being difficult to \textbf{F1L} also appear difficult to \textbf{F2L}.

\section{Conclusion}

The main aim of this article was to break the monopoly of flow-based approaches to OCSTP by introducing several MIQP and MILP formulations based on the direct modeling of a distance matrix of a tree by continuous or binary variables. 

We proposed several simple, intuitive, flexible and easy to implement distance-based MILP and MIQP models for OCSTP and compared their performance on a wide variety of real-world samples from transportation industry.

It is important to note that these novel formulations, namely, \textbf{F1Q}, \textbf{F1L}, and \textbf{F2Q} (considered together with its linearization \textbf{F2L}) can be generalized beyond the scope of the constrained ORSTP, which was used as a touchstone in this article: 
\begin{itemize}
\item All formulations except \textbf{F1Q} can be extended to the general network design problems. The distance between vertices in a tree then becomes the shortest-path distance in a graph with loops. Connected graphs with loops can be sought by just allowing for non-arborescent vertex degree sequences as an input for \textbf{F1L}, \textbf{F2Q} (and \textbf{F2L}).
\item All formulations directly extend to directed graphs by weakening the symmetry conditions on structural variables (e.g., \eqref{f1q_symmx} and alike). Certainly, the vertex degree constraints should be revised, if relevant.
\item \textbf{F1Q} and \textbf{F1L} are extended to the general OCSTP with variable graph edge lengths $t_{ij}$, $(i,j)\in E_s$ as explained in Sections \ref{sect_LD} and \ref{sect_f1l}. \textbf{F2L} can also be extended to integral lengths $t_{ij}$ in a quasi-polynomial manner as proposed in \cite{veremyev2015critical}.
\item According to \cite{goubko2020bilinear}, the system of equations \eqref{eq_LD} completely characterizes the set of (weighted) trees. Therefore, the optimization criterion in \textbf{F1Q} need not be a monotone function of graph distances. The formulation \textbf{F2L} has also been initially proposed for a non-monotone criterion. However, additional constraints should be added to \textbf{F2L} (and, accordingly, to \textbf{F2Q}) as explained in \cite{veremyev2015critical}. 
\item A common drawback of the novel distance-based formulations is that they (in contrast to the multicommodity flow model \textbf{F0L}) hardly integrate the \textit{flow capacity constraints} \citep{magnanti1984network} and edge design or travel costs that depend on the amount of requirement traversing the edge. These features are typical for road topology network design problems \citep{jia2019review}.
\end{itemize}

Table \ref{tb_extend} summarizes the generalization capabilities of the considered formulations for OCSTP.   

\begin{table}[ht]
\setlength{\tabcolsep}{3pt}
\scriptsize
\renewcommand{\arraystretch}{1.2}
\caption{Possible extensions of MIQP and MILP formulations studied in this article}
\begin{center}
\begin{tabular}{ l|c|c|c|c }
\hline
Feature & F0L & F1Q & F1L & F2Q/F2L \\
\hline
Graphs with loops & $+$ & $-$ & $+$ & $+$ \\
\hline
Directed graphs & $+$ & $-$ & $+$ & $+$ \\
\hline
Variable edge lengths & $+$ & $+$ & $+$ & $+/-$ \\
\hline
Non-monotone criterion & $-$ & $+$ & $-$ & $+$ \\
\hline
Flow capacity constraints & $+$ & $-$ & $-$ & $-$ \\
\hline
\end{tabular}
\end{center}
\label{tb_extend}
\end{table}%

%


%

Finally, we list several minor improvements that might refine the formulations.
\begin{itemize}
    \item We can tighten the constraints \eqref{f1q_dij_var}, \eqref{f1l_dij_lim} by replacing the maximum tree diameter $L$ by an adaptive upper-bound estimate $L_{ij}$ for graph distances. Namely, if both $i$ and $j$ are internal vertices, then $L_{ij}=L-2$. If one of them is a leaf and the other is an internal vertex, then $L_{ij}=L-1$, and $L_{ij}=L$ otherwise.

    \item Compared to standard out-of-the-box procedures for candidate solution generation, custom local search heuristics from \cite{goubko2020lower} generally (although not in all cases) give slight improvement in computation time and convergence speed. Therefore, it might be a promising idea to combine different heuristics calculated in parallel at the pre-processing stage to speed up all considered formulations.

    \item There is some space for the improvement of the tree-enumeration algorithm from  Section~\ref{sect_brute}. Although defining numerous QAP problems for Gurobi never takes more than 10\% of total computation time (even for small problems with $n<20$), the multiple-scenario feature of Gurobi software will be able to foster calculation of a collection of QAP problems that share the same structure. Specialized QAP solvers might further improve the leaf assignment step. Also, QAP problems of leaf assignment for different tree topologies could be solved in parallel to decrease the overall computation time. 
\end{itemize}

\section*{Acknowledgements}

This work was supported by the Russian Foundation for Basic Research (RFBR) [18-07-01240].

The authors are grateful to the Gurobi Optimization LLC for the academic licenses of their optimization software, which was extensively used in computational experiments. 

\bibliographystyle{apalike}
\bibliography{references}

\end{document}